\newcommand{\showcomments}{yes}
\renewcommand{\showcomments}{no}
\newsavebox{\commentbox}
\newtheorem{thm}{Theorem}[section]
\newtheorem{lem}[thm]{Lemma}
\newtheorem{cor}[thm]{Corollary}
\newtheorem{prop}[thm]{Proposition}
\theoremstyle{definition}
\newtheorem{claim*}{Claim}
\DeclareMathOperator{\GL}{GL}
\newcommand{\Z}{\ensuremath{\mathbb{Z}}}
\newcommand{\p}{\ensuremath{\mathbb{P}}}
\newcommand{\E}{\ensuremath{\mathbb{E}}}
\newcommand{\Rmnum}[1]{\mathbf{{\expandafter\@slowromancap\romannumeral #1@}}}
\DeclareMathOperator{\Ave}{Ave}
\let\oldmarginpar\marginpar
\renewcommand\marginpar[1]{\-\oldmarginpar[\raggedleft\footnotesize #1]%
{\raggedright\footnotesize #1}}
\newcounter{enumitemp}
\newcommand{\N}{\ensuremath{\mathbb{N}}}
\newcommand{\ZZ}{\mathbb Z}
\newcommand{\NN}{\mathbb N}
\newcommand{\PP}{\mathbb P}
\newcommand{\EE}{\mathbb E}
\newcommand{\Cay}{{\rm Cay}}
\begin{document}
\title{Expected depth of random walks on groups}
\author[K. Bou-Rabee]{Khalid bou-Rabee}
\thanks{K.B. supported in part by NSF grant DMS-1405609, A.M. supported by Swiss NSF grant P2GEP2\_162064.}
\address{School of Mathematics, CCNY CUNY, New York City, New York, USA}
\email{khalid.math@gmail.com}
\author[I. Manolescu]{Ioan Manolescu}
\thanks{I.M. affiliated to NCCR SwissMAP}
\address{Universit\'e de Fribourg, Fribourg, Switzerland}
\email{ioan.manolescu@unifr.ch}
\author[A. Myropolska]{Aglaia Myropolska}
\address{Laboratoire de Math\'ematiques,
Universit\'e Paris-Sud 11, Orsay, France}
\email{aglaia.myropolska@gmail.com}

\date{\today}
\maketitle

\begin{abstract}
	For $G$ a finitely generated group and $g \in G$, we say $g$ is detected by a normal subgroup $N \lhd G$ if $g \notin N$. 
	The depth $D_G(g)$ of $g$ is the lowest index of a normal, finite index subgroup $N$ that detects $g$.
	In this paper we study the expected depth, $\EE[D_G(X_n)]$, where $X_n$ is a random walk on $G$. 
	We give several criteria that imply that 
	$$\EE[D_G(X_n)] \xrightarrow[n\to \infty]{} 2 + \sum_{k \geq 2}\frac{1}{[G:\Lambda_k]}\, ,$$
	where $\Lambda_k$ is the intersection of all normal subgroups of index at most $k$.  In particular, the equality holds in the class of all nilpotent groups and in the class of all linear groups satisfying Kazhdan Property $(T)$.
	We explain how the right-hand side above appears as a natural limit and also give an example where the convergence does not hold. 
\end{abstract}

\section{Introduction}



Let $G$ be a finitely generated group.
The \emph{depth} of an element in $G$ encodes how well approximated that element is by finite quotients of the group.
The goal of this article is to find the average depth of an element of $G$.
As such, this question is ill-posed, and a more precise one is:
\emph{what is the asymptotic expected depth of a random walk on the Cayley graph of $G$?}
This question arises naturally when quantifying \emph{residual finiteness}, 
or in other words, when studying statistics surrounding the \emph{depth function}.

For $g \in G$ and $N$ a normal subgroup of $G$, we say $g$ is \emph{detected by} $N$ if $g \notin N$ 
(in other words, if $g$ is mapped onto a non-trivial element of $G/N$). 
The \emph{depth} of $g$ is the lowest index of a normal, finite index subgroup $N$ that detects $g$. 
Formally, for $g\in G$, $g \neq e$, set
$$D_G (g) := \min\{|G/N|: N\lhd_{\text{finite index}} G \text{ and } g\notin N\}.$$
For $g = e$ the above definition would produce a depth equal to $\min \emptyset = \infty$. 
In the context of random walks, this singularity would produce trivial results. 
To circumvent this triviality, we instead define $ D_G(e) := 0.$
With this definition, $G$ is {\em residually finite} if and only if $D_G(g) < \infty$ for all elements $g \in G$. 

Let $G$ be residually finite and $S$ be a finite generating set, which will be always considered symmetric. 
Then the \emph{residual finiteness growth function} is
$$F_G^S(n)=\max_{g\in B_G^S(n)} D_G(g),$$
where $B_G^S(n)$ is the ball of radius $n$ in the Cayley graph $\Cay(G,S)$.
This notion was introduced in \cite{B09} and has been studied for various classes of groups; 
the relevant results to this paper are listed in \S\ref{depth}.

While the residual finiteness growth function reflects the largest depth of an element in the ball of radius $n$, 
the question of interest in this paper is what we can say about the ``average'' depth of a ``uniform'' element of the group. 
If $G$ is discrete but infinite, there is no natural definition of a uniform probability measure on $G$, hence no good notion of a uniform element. 
However, we may try to approach the desired ``average'' by averages of well defined measures. 
Two approaches come to mind: 
\begin{itemize}
	\item For $n \geq 0$, let $Z_n$ be a uniform element in $B_G^S(n)$, and let 
	$$a_n = \mathbb E[D_G(Z_n)] = \frac{1}{|B_G^S(n)|}\sum_{g \in B_G^S(n)} D_G(g).$$
	We could then say that the average depth of an element of $G$ is $\lim_n a_n$, provided that this limit exists. 
	\label{a}
	\item Alternatively, one may define a random walk $(X_n)_{n \in \mathbb N}$ on a Cayley graph $Cay(G,S)$ of $G$, 
	starting from the neutral element $e$, and set 
	$$ b_n =  \mathbb E[D_G(X_n)].$$
	Then define the average depth as $\lim_n b_n$, again under the condition that the limit exist.  
\end{itemize}
One expects that for compliant groups, both limits exist and are equal. 
We will focus on the second situation; but will make reference to the first to stress similarities. 
The exact definition of $(X_n)_{n \geq 0}$ as well as a discussion on random walks on groups 
is deferred to \S\ref{sec:RW}. 
We mention here only that $(X_n)_{n\geq 0}$ is a {\em lazy} random walk, 
that is a process that at every step remains unchanged with probability $1/2$ and takes a step otherwise. 


It is a general fact that for an integer valued non-negative random variable $Y$, 
$$\mathbb E(Y) = \sum_{k \geq 0} \mathbb P (Y > k).$$
It may therefore be interesting to study $\mathbb P[D_G(X_n) > k]$ for $(X_n)_{n\geq 0}$ as above. 

For $k \geq 2$, let $\Lambda_k$ be the intersection of all normal subgroups of $G$ of index at most $k$. 
(For $k =0,1$, set $\Lambda_k = G$). 
Then, for $g \in G \setminus \{e\}$, $D_G(g) > k$ if and only if $g \in \Lambda_k$. 
Thus 
\begin{align}\label{eq:Esum}
	\mathbb E[D_G(X_n)] = \sum_{k \geq 0} \mathbb P (X_n \in \Lambda_k\setminus\{e\}).
\end{align}
As we will see in Corollary~\ref{cor:uniform}, 
$$ \mathbb P (X_n \in \Lambda_k\setminus\{e\}) \xrightarrow[n \to \infty]{} \frac{1}{[G:\Lambda_k]}.$$
One may therefore expect that 
\begin{align}\label{eq:goal}
	\mathbb  E[D_G(X_n)] \xrightarrow[n \to \infty]{} 2 + \sum_{k \geq 2}\frac{1}{[G:\Lambda_k]},
\end{align}
where the factor $2$ appears since $[G:\Lambda_1]=[G:\Lambda_0]=[G:G]=1$.
For this reason, we call the right hand side of the above the \emph{presumed limit}.
However, the convergence above is far from obvious.
The main goal of this paper is to provide criteria for $G$ under which~\eqref{eq:goal} holds. 
We will also provide an example where this is not valid. 

The finiteness of  $\sum_{k\geq 2}\frac{1}{|G:\Lambda_k|}$ in~\eqref{eq:goal} depends on the group $G$ and is related to the \emph{intersection growth} $i_G (k)=[G:\Lambda_k]$ of $G$. 
It follows from Lemma~\ref{linear-lemma} and Theorem~\ref{thm:linear-finite average} that $\sum_{k\geq 2}\frac{1}{|G:\Lambda_k|} < \infty$ for finitely generated linear groups.
Moreover, finitely generated nilpotent groups enjoy this property as it is a classical result that they are linear (see Segal \cite[Chapter 5, \S B, Theorem 2]{MR713786} or Hall \cite[p. 56, Theorem 7.5]{MR0283083}).

Our two results ensuring~\eqref{eq:goal} are the following.

\begin{thm}
	Let $G$ be a linear group with Kazhdan Property $(T)$. Then 
	$$\lim_{n\rightarrow \infty}\mathbb{E}[D_G(X_n)] = 2 +\sum_{k \geq 2} \frac1{[G:\Lambda_k]}<\infty$$
	for any finite generating set $S$ of $G$.
	In particular,  $\lim_{n\rightarrow \infty}\mathbb{E}[D_G(X_n)]$ is finite for the special linear groups $SL_k(\mathbb{Z})$ with $k\geq 3$.
	\label{kazhdan}
\end{thm}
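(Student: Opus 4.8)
The plan is to estimate the series in~\eqref{eq:Esum} directly, using Property~$(T)$ to make each term converge to its limit at a rate that is uniform over all finite quotients of $G$, and using linearity to keep the number of terms that are active at time $n$ under control. For $k\ge 0$ let $\pi_k\colon G\twoheadrightarrow Q_k:=G/\Lambda_k$ be the quotient map; since $G$ is finitely generated, $Q_k$ is a finite group of order $[G:\Lambda_k]$, and $X_n$ projects under $\pi_k$ to the lazy random walk on $Q_k$ driven by the image of $S$, so that $\PP(X_n\in\Lambda_k)=p^n_{Q_k}(e,e)$, the $n$-step return probability of that walk. We may assume $G$ infinite (for finite $G$ the right-hand side of the claimed identity is $+\infty$ while $\EE[D_G(X_n)]\le|G|$, so there is nothing to prove); in particular $\PP(X_n=e)\to 0$. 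By Lemma~\ref{linear-lemma} and Theorem~\ref{thm:linear-finite average}, $\sum_{k\ge 0}[G:\Lambda_k]^{-1}<\infty$, so the presumed limit is finite; and since a finitely generated linear group has at most polynomial residual finiteness growth (see~\S\ref{depth}), there are constants $C,d$ with $F_G^S(n)\le Cn^{d}$, while $F_G^S(n)\to\infty$ because $G$ is infinite.

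The crucial use of Property~$(T)$ is a spectral gap for the walk that is \emph{uniform over all finite quotients} of $G$. Fix a Kazhdan pair $(S,\kappa)$. For a finite quotient $Q=G/N$, the subspace $\ell^2_0(Q)=\{f\in\ell^2(Q):\sum_x f(x)=0\}$ is a unitary $G$-representation with no nonzero invariant vector, so every unit vector in it is moved by at least $\kappa$ under some element of $S$; a Cauchy--Schwarz estimate over the remaining generators then bounds the norm of the lazy transition operator on $\ell^2_0(Q)$ by $\rho:=1-\kappa^2/(4|S|)<1$ (laziness is precisely what keeps the spectrum away from $-1$). Decomposing $\delta_e$ into its constant part and its part in $\ell^2_0(Q)$ gives $|p^n_Q(e,e)-|Q|^{-1}|\le\rho^{\,n}$ for \emph{every} finite quotient $Q$ of $G$, with one and the same $\rho$. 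Applying this to $Q=Q_k$ for all $k\le F_G^S(n)$ — the terms with $k>F_G^S(n)$ in~\eqref{eq:Esum} vanish, since the length-$n$ walk stays in $B_G^S(n)$ and hence $D_G(X_n)\le F_G^S(n)$ almost surely — and writing $\PP(X_n\in\Lambda_k\setminus\{e\})=p^n_{Q_k}(e,e)-\PP(X_n=e)$, we obtain
\begin{align*}
\Bigl|\EE[D_G(X_n)]-\sum_{k\ge 0}\tfrac{1}{[G:\Lambda_k]}\Bigr|
&\le\sum_{k=0}^{F_G^S(n)}\Bigl|\,p^n_{Q_k}(e,e)-\tfrac{1}{[G:\Lambda_k]}-\PP(X_n=e)\,\Bigr|+\sum_{k>F_G^S(n)}\tfrac{1}{[G:\Lambda_k]}\\
&\le\bigl(F_G^S(n)+1\bigr)\bigl(\rho^{\,n}+\PP(X_n=e)\bigr)+\sum_{k>F_G^S(n)}\tfrac{1}{[G:\Lambda_k]}.
\end{align*}

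It remains to check that this bound tends to $0$. The last term is the tail of a convergent series, hence vanishes as $n\to\infty$. For the first term, run the same Kazhdan-constant estimate on the left regular representation $\ell^2(G)$ — which has no nonzero $G$-invariant vector since $G$ is infinite — to conclude that the lazy transition operator on $G$ also has norm at most $\rho$, so $\PP(X_n=e)=\langle(\tfrac12(I+P))^n\delta_e,\delta_e\rangle\le\rho^{\,n}$; thus $(F_G^S(n)+1)(\rho^{\,n}+\PP(X_n=e))\le 2(F_G^S(n)+1)\rho^{\,n}\to 0$, using that $F_G^S(n)$ is only polynomial in $n$. This yields $\EE[D_G(X_n)]\to 2+\sum_{k\ge 2}[G:\Lambda_k]^{-1}$ for every finite symmetric generating set $S$ (the $2$ collecting the $k=0,1$ terms), and the final assertion follows because $SL_k(\ZZ)$ is linear and enjoys Property~$(T)$ for $k\ge 3$. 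I expect the heart of the matter to be the tension inside the last display: Property~$(T)$ must provide a gap $\rho$ that does not decay as the finite quotients $Q_k$ blow up, and linearity is needed precisely to keep the number of active quotients — of order $F_G^S(n)$ — growing slower than $\rho^{-n}$; extracting an honest operator-norm bound from the Kazhdan constant (and, more routinely, the polynomial bound on $F_G^S$) is where the real work lies.
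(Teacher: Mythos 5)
Your proof is correct and rests on exactly the same three pillars as the paper's: the uniform spectral gap over all finite quotients supplied by Property $(T)$, the polynomial bound on $F_G^S$ supplied by linearity (which, via $D_G(X_n)\le F_G^S(n)$, controls which indices $k$ can contribute at time $n$), and the finiteness of $\sum_k 1/[G:\Lambda_k]$ from Lemma~\ref{linear-lemma} and Theorem~\ref{thm:linear-finite average}. What differs is the bookkeeping. The paper fixes $k$ and takes the supremum over $n$ of $\PP(X_n\in\Lambda_k\setminus\{e\})$ --- which is controlled by the first time $N_k$ with $F_G^S(N_k)>k$, giving a dominating sequence $p_k=1/[G:\Lambda_k]+\theta^{N_k}$ with $N_k\gtrsim k^{1/b}$ --- and concludes by dominated convergence (Proposition~\ref{prop:domination}). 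You instead fix $n$ and sum the errors over the $O(F_G^S(n))$ active indices $k$, which buys an explicit rate of convergence rather than mere convergence. The price of your route is that you must control $(F_G^S(n)+1)\,\PP(X_n=e)$, so you need super-polynomial decay of the return probability; you correctly obtain this from Property $(T)$ applied to $\ell^2(G)$ (equivalently, from non-amenability of an infinite Kazhdan group), whereas the dominated-convergence route only needs $\PP(X_n=e)\to 0$, which Corollary~\ref{cor:uniform} invokes for an arbitrary infinite group. One small caveat: your parenthetical dismissal of finite $G$ (``nothing to prove'') is backwards --- for finite $G$ the right-hand side is $+\infty$ while $\EE[D_G(X_n)]\le|G|$, so the stated equality would fail and infiniteness of $G$ is a standing implicit hypothesis, as it is throughout the paper; this does not affect the substance of your argument.
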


\begin{thm}
	Let $G$ be a finitely generated nilpotent group. Then 
	$$\lim_{n\rightarrow \infty}\mathbb{E}[D_G(X_n)] = 2 +\sum_{k \geq 2} \frac1{[G:\Lambda_k]}<\infty$$
	for any finite generating set $S$ of $G$.
	\label{nilp}
\end{thm}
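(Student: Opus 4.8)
The plan is to verify the hypotheses of whatever general convergence criterion the paper establishes (presumably the one behind Theorem \ref{kazhdan}), and the decisive structural input will be that finitely generated nilpotent groups have \emph{polynomially bounded intersection growth}. Starting from the identity \eqref{eq:Esum}, the obstacle to passing the limit inside the sum is purely one of domination: we need a summable bound on $\mathbb P(X_n \in \Lambda_k \setminus \{e\})$ that is uniform in $n$, so that dominated convergence together with Corollary \ref{cor:uniform} yields \eqref{eq:goal}. Since $X_n \in \Lambda_k$ forces $\pi_k(X_n) = e$ in the finite quotient $G/\Lambda_k$, the natural bound is $\mathbb P(X_n \in \Lambda_k) \le \sup_m \mathbb P(\bar X_m = e)$ for the projected walk on $G/\Lambda_k$; one then wants this return probability to decay fast enough in $[G:\Lambda_k]$.

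The key steps, in order. First, recall from Lemma \ref{linear-lemma} and Theorem \ref{thm:linear-finite average} (applicable since finitely generated nilpotent groups are linear) that $\sum_{k \ge 2} [G:\Lambda_k]^{-1} < \infty$, so the presumed limit is finite; this also shows the target series converges. Second, reduce to a uniform-in-$n$ tail estimate: it suffices to show that for some function $\phi$ with $\sum_k \phi(k) < \infty$ one has $\mathbb P(X_n \in \Lambda_k \setminus\{e\}) \le \phi(k)$ for all $n$. Third, exploit nilpotence: $G/\Lambda_k$ is a finite nilpotent group, hence the direct product of its Sylow subgroups, and its order $[G:\Lambda_k]$ is controlled — one uses the classical fact that a finite nilpotent quotient of a $d$-generated group of nilpotency class $c$ has order bounded in terms of its exponent, so that a quotient of small index is a quotient of small exponent, i.e.\ $X_n \in \Lambda_k$ implies $X_n^{N(k)} \in \Lambda_k'$ for a much larger subgroup; iterating, membership in $\Lambda_k$ is a strong constraint. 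Fourth, bound the return probability: for a symmetric lazy walk on a finite group $Q = G/\Lambda_k$ generated by the image of $S$, $\sup_n \mathbb P(\bar X_n = e)$ is comparable to the spectral gap contribution, and for nilpotent $Q$ one can get a quantitative bound decaying polynomially in $|Q|$ via the structure of $Q$ (or, more softly, via the diameter of $Q$ relative to $S$, using that nilpotent Cayley graphs have diameter growing like $|Q|^{1/D}$ where $D$ is the homogeneous dimension). Finally, feed this uniform bound into dominated convergence applied to \eqref{eq:Esum}, using Corollary \ref{cor:uniform} for the pointwise limit of each term.

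The main obstacle is the fourth step: obtaining a return-probability bound on the finite nilpotent quotients $G/\Lambda_k$ that is summable after reindexing by the index $[G:\Lambda_k]$. Because many distinct $k$ can give the same quotient (the function $k \mapsto \Lambda_k$ is a step function), one must be careful to sum over \emph{distinct} normal subgroups with weights reflecting how many values of $k$ they account for; concretely $\sum_{k\ge 2}[G:\Lambda_k]^{-1}$ is a sum over these steps, and the estimate must respect that bookkeeping. I expect that the cleanest route is not to estimate return probabilities quotient-by-quotient but to prove a single inequality of the form $\mathbb P(X_n \in \Lambda_k\setminus\{e\}) \le C\,[G:\Lambda_k]^{-1}$ valid for all $n \ge n_0(k)$ together with a crude bound $\mathbb P(X_n \in \Lambda_k) \le \psi(n)$ for small $n$ that is integrable against the $k$-sum; combining the two ranges gives the domination. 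The nilpotent hypothesis enters exactly to make $n_0(k)$ and the constants behave polynomially, which is where one invokes that $G$ has polynomial growth and well-understood subgroup/quotient structure, in contrast to the Property $(T)$ case where a spectral gap does the same job directly.
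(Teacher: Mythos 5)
Your scaffolding coincides with the paper's: finiteness of the presumed limit via Lemma~\ref{linear-lemma} and Theorem~\ref{thm:linear-finite average} (nilpotent $\Rightarrow$ linear), pointwise convergence from Corollary~\ref{cor:uniform}, and dominated convergence through Proposition~\ref{prop:domination}, so that everything reduces to exhibiting a summable sequence $p_k\geq\sup_n\PP(X_n\in\Lambda_k\setminus\{e\})$. But that reduction is the easy half, and the half where you would actually produce $p_k$ is missing. Your proposed route --- bound $\PP(X_n\in\Lambda_k\setminus\{e\})$ by a return probability of the projected walk on the finite nilpotent quotient $G/\Lambda_k$ and show it decays suitably in $[G:\Lambda_k]$ --- is never carried out, and as stated it cannot close: since $\PP(X_n\in\Lambda_k)=\PP(\bar X_n=e)$, the whole difficulty sits in the pre-mixing window where the quotient walk is still concentrated near the identity, and unlike the Property~$(T)$ case there is no uniform spectral gap to kill it (already the quotients $\ZZ/m\ZZ$ have gap tending to $0$). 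You flag this yourself as ``the main obstacle,'' and your steps three and four remain wishes rather than estimates. A further sign that the plan is off target: your announced ``decisive structural input,'' polynomially bounded intersection growth, is false --- already $[\ZZ:\Lambda_k]=\mathrm{lcm}(1,\dots,k)$ grows exponentially --- and the correct decisive input is the \emph{polylogarithmic residual finiteness growth} of Theorem~\ref{rf-nilp}, which you never invoke.

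For comparison, the paper's proof handles the pre-mixing window and the large-$n$ window by two different mechanisms, neither of which appears in your sketch. First it quotients by the finite torsion subgroup to reduce to the torsion-free case, where $G=\ZZ\ltimes H$. On the event that the $\ZZ$-coordinate of the walk is nonzero, $D_G$ is bounded by $D_\ZZ$ of that coordinate, and Lemma~\ref{lem:RWonZ} (a local estimate for walks on $\ZZ$ plus the Prime Number Theorem, giving $[\ZZ:\Lambda_k(\ZZ)]\geq e^{ck}$) yields a bound $e^{-ck}$ uniform in $n$. On the complementary event, Theorem~\ref{rf-nilp} shows $F_G^S(n)\leq C(\log n)^{h(G)}$, so $\PP(X_n\in\Lambda_k\setminus\{e\})$ vanishes for all $n<\exp(Ck^{1/h(G)})$, and for larger $n$ one pays only the return probability $O(n^{-1/2})$ of the $\ZZ$-coordinate, i.e.\ at most $\exp(-\tfrac{C}{2}k^{1/h(G)})$, which is summable in $k$. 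Without this (or an equivalent quantitative mechanism), your proposal has a genuine gap exactly at the domination estimate that the whole argument hinges on.
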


In \S\ref{sec:Fatou}, it will also be shown that the convergence holds whenever the presumed limit is infinite. 
Considering these examples, one may think that the convergence in~\eqref{eq:goal} is always valid. 
However, in Proposition~\ref{prop:counter_ex}, we exhibit a $3$-generated group for which the presumed limit is finite but 
$\lim_{n\rightarrow \infty}\mathbb{E}[D_G(X_n)] = \infty$.

Henceforth, when no ambiguity is possible, we drop the index $G$ from the notation $D_G(.)$. 

\section{Preliminaries}

\subsection{Depth function and residual finiteness growth}\label{depth}

This short subsection includes some results on the residual finiteness growth function that we will use in the sequel. 


\begin{thm}[\cite{B09}]
	Let $G$ be a finitely generated nilpotent group with a generating set $S$. Then 
	\begin{align*}
	F_G^S(n)\leq C\log (n)^{h(G)}, \qquad \forall n \geq 2,
	\end{align*}
	where $h(G)$ is the Hirsch length of $G$ and $C=C(G,S)$ is a constant independent of $n$.
	\label{rf-nilp}
\end{thm}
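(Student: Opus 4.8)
The plan is to reduce to the torsion-free case and then, for each non-trivial $g$ lying in the ball of radius $n$, produce a normal finite-index subgroup that detects $g$ and whose index is polylogarithmic in $n$ with exponent $h(G)$.

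For the reduction, I would first isolate the torsion: the torsion subgroup $T = \mathrm{Tor}(G)$ of the finitely generated nilpotent group $G$ is finite, $G/T$ is torsion-free nilpotent with $h(G/T) = h(G)$, and a subgroup $\bar N \lhd G/T$ of finite index detecting the image of $g$ pulls back to a normal finite-index subgroup of $G$ of the same index detecting $g$ — while the image of $g$ still lies in a ball of radius $n$ in $G/T$ for the image generating set. For $g \in T\setminus\{e\}$, residual finiteness of $G$ together with finiteness of $T$ gives a uniform bound $D_G(g) \le C_0 = C_0(G,S)$. Hence it suffices to prove the bound for $G$ torsion-free, and (since $B_G^S(n)$ is finite, the finitely many small values being absorbed into the constant) only for large $n$.

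Now take $G$ torsion-free finitely generated nilpotent of Hirsch length $h = h(G)$ and fix a Mal'cev basis $x_1,\dots,x_h$ adapted to a central series $1 = G_0 \lhd \cdots \lhd G_h = G$ with $G_i \lhd G$, $G_i/G_{i-1}\cong\mathbb{Z}$ and $[G,G_i]\subseteq G_{i-1}$, so every $g$ is uniquely $g = x_1^{e_1(g)}\cdots x_h^{e_h(g)}$ with $e_i(g)\in\mathbb{Z}$. I would then assemble two ingredients. First, a polynomial coordinate bound: there is a polynomial $P$ depending only on $(G,S)$ with $|e_i(g)|\le P(n)$ for all $g\in B_G^S(n)$ — this is routine, for instance via an embedding $G\hookrightarrow \mathrm{UT}_d(\mathbb{Z})$ whose matrix entries grow polynomially along words together with the fact that the $e_i$ are polynomial functions of those entries. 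Second, a family of congruence quotients adapted to the Mal'cev basis: apart from a fixed finite set of ``bad'' primes, for every $m$ prime to them there is a normal subgroup $N_m \lhd G$ with $[G:N_m] = m^{h}$ (or at least $\le c_0 m^{h}$ for a constant $c_0 = c_0(G,S)$) such that $g\in N_m$ forces $m \mid e_i(g)$ for all $i$ — equivalently, $g\notin N_m$ whenever some coordinate $e_i(g)$ is not divisible by $m$. Such $N_m$ can be taken as the kernel of the reduction modulo $m$ of the Mal'cev polynomial group law on $G\cong\mathbb{Z}^h$, or as $\langle x_1^m,\dots,x_h^m\rangle$, normality and the index count following from commutator collection in the nilpotent group.

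With these in hand the conclusion is quick: given $g\in B_G^S(n)\setminus\{e\}$, pick $i_0$ minimal with $e_{i_0}(g)\ne 0$, so $1\le |e_{i_0}(g)|\le P(n)$ by the first ingredient; since the product of the primes up to $x$ exceeds $e^{x/2}$ for large $x$ (Chebyshev), there is a prime $p$, not bad and not dividing $e_{i_0}(g)$, with $p\le C_1\log n$ for some $C_1 = C_1(G,S)$. Then $g\notin N_p$, and therefore
$$D_G(g)\le [G:N_p]\le c_0\, p^{h}\le c_0\,C_1^{h}(\log n)^{h}.$$
Maximising over $g\in B_G^S(n)$ gives $F_G^S(n)\le C(\log n)^{h(G)}$ with $C = c_0 C_1^{h}$. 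The hard part will be the second ingredient: exhibiting honestly normal finite-index subgroups whose index is only $m^{h(G)}$ — rather than the larger $m^{\binom{d}{2}}$ produced by a naive principal congruence subgroup of $\mathrm{UT}_d(\mathbb{Z})$ — and which resolve each Mal'cev coordinate modulo $m$; this is exactly where the nilpotent structure (a Mal'cev basis and commutator collection) is indispensable, the coordinate bound being standard and the prime-counting step elementary.
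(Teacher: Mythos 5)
Your proposal is correct, and it follows essentially the approach of the cited source: the paper itself gives no proof of this theorem (it is quoted from \cite{B09}, with only the remark that the Prime Number Theorem and Hall's Embedding Theorem are the key ingredients), and your argument is exactly that strategy --- reduce to the torsion-free case, bound the Mal'cev coordinates polynomially via the unitriangular embedding, use Chebyshev/PNT to find a good prime $p = O(\log n)$ missing a nonzero coordinate, and detect $g$ in a congruence-type normal quotient of order $p^{h(G)}$. The one point you rightly flag as delicate --- getting index $p^{h(G)}$ rather than $p^{\binom{d}{2}}$ --- is handled correctly by your kernel-of-mod-$p$-reduction construction for primes avoiding the denominators of the Mal'cev structure polynomials.
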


The Prime Number Theorem and Hall's Embedding Theorem play key roles in the proof of Theorem~\ref{rf-nilp}. In \cite{BM13}, the following is proved using Gauss's Counting Lemma to help quantify Mal'cev's classical proof of residual finiteness of finitely generated linear groups.

\begin{thm}[\cite{BM13}]
	Let $K$ be a field.
	Let $G$ be a finitely generated subgroup of $GL(m,K)$ with a generating set $S$. 
	Then there exists a positive integer $b$ such that 
	\begin{align*}
		F_G^S(n)\leq C n^b,\qquad \forall n \geq 1,
	\end{align*}
	where C=C(G,S) is a constant independent of $n$.
\end{thm}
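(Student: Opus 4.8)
The plan is to turn Mal'cev's proof of residual finiteness of finitely generated linear groups into a quantitative one. First I would reduce to matrices over a finitely generated ring: let $A \subseteq K$ be the subring generated by $1$ together with all entries of the matrices in $S \cup S^{-1}$, so that $A$ is a finitely generated integral domain and $G \le GL(m,A)$. Fix a presentation $A = R[t_1,\dots,t_d]/\mathfrak a$ with $R = \mathbb Z$ or $R = \mathbb F_p$ (equivalently, realize $A$ as a finite extension of a polynomial ring by Noether normalization), and for each generator fix a lift of each of its entries to a polynomial in $R[t_1,\dots,t_d]$. If $g = s_{i_1}^{\epsilon_1}\cdots s_{i_n}^{\epsilon_n}$ has word length $\le n$, then multiplying the chosen lifts entrywise shows that every entry of $g$, hence of $g-I$, is represented by a polynomial of total degree $O(n)$ whose integer (or $\mathbb F_p$) coefficients have height $\exp(O(n))$; the implied constants depend only on $(G,S)$. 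This is a direct induction, but it must be carried out with a complexity function that behaves well under both addition (degree and log-height bounded by max/sum) and multiplication (degrees add, log-heights add up to a polynomial factor counting monomials).

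If $g \ne e$, some entry $\bar a \in A$ of $g-I$ is nonzero, with a lift $a$ of degree $D = O(n)$ and coefficient height $H$ with $\log H = O(n)$. The heart of the argument is then an effective form of the classical detection lemma: there is a ring homomorphism $\rho\colon A \to \mathbb F$ onto a finite field with $\rho(\bar a)\ne 0$ and $|\mathbb F| \le \big(D + \log H + 2\big)^{c}$, with $c$ depending only on $A$. To prove it, in positive characteristic one specializes $t_1,\dots,t_d$ to values in a finite field: avoiding the (fixed) bad-reduction locus of the presentation and the hypersurface $\{a = 0\}$ of degree $D$, an effective weak Nullstellensatz over finite fields --- essentially a Schwartz--Zippel / Gauss-type count of points off a hypersurface --- produces such a point already in an extension $\mathbb F_{q^{j}}$ of $\mathbb F_p$ of size $O(D)$. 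In characteristic $0$ one first picks, via the prime number theorem, a rational prime $q$ of size polynomial in $\log H$ that does not divide the content of $a$ and avoids the finitely many "bad" primes of the presentation (in particular those inverted modulo $\bar a$, which are bounded in terms of the content of $a$), reduces $A$ modulo $q$, and then specializes as in the positive-characteristic case over $\mathbb F_q$; this lands in a finite field of size polynomial in $D + \log H$. Either way $|\mathbb F|$ is bounded by a fixed power of $D + \log H$.

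Finally, $\rho$ induces a homomorphism $GL(m,A) \to GL(m,\mathbb F)$; restricting to $G$ and passing to the kernel $N$ gives a finite-index normal subgroup of $G$ with $g \notin N$, since $\rho(\bar a)\ne 0$ forces $\rho(g)\ne I$, and $[G:N] \le |GL(m,\mathbb F)| \le |\mathbb F|^{m^{2}}$. Combining with the bounds above, $D_G(g) \le [G:N] \le \big(D + \log H + 2\big)^{c m^{2}} = O(n^{cm^{2}})$ uniformly over $g \in B_G^S(n)$, and after enlarging the exponent slightly to absorb logarithmic factors we obtain $F_G^S(n) \le C n^{b}$ with $b = b(G,S)$. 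The main obstacle is the effective Nullstellensatz step: one must specialize the transcendental parameters of $A$ while (i) keeping $\bar a$ nonzero, (ii) controlling the degree of the finite field needed to realize the algebraic relations $\mathfrak a$, and (iii) in characteristic $0$ simultaneously choosing a small auxiliary prime --- and it is precisely here that the prime number theorem and the Gauss-type counting lemma enter. The bookkeeping showing that entries of length-$n$ words have degree $O(n)$ and log-height $O(n)$ is more routine but still needs care in the choice of complexity function.
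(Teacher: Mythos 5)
The paper does not actually prove this statement --- it is quoted from \cite{BM13}, and the only indication given of the method is that the proof ``quantifies Mal'cev's classical argument using Gauss's Counting Lemma.'' Your reconstruction follows exactly that strategy (reduce to a finitely generated domain $A$, bound degree and height of entries of length-$n$ words linearly resp.\ quasi-linearly in $n$, find a finite-field quotient of $A$ of polynomial size detecting a nonzero entry of $g-I$, and pass to the congruence kernel), so in architecture it matches the cited proof.

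The one place where your write-up has a real soft spot is the detection lemma itself. As literally stated, you specialize \emph{all} of $t_1,\dots,t_d$ to values in a finite field and ask the point to avoid both the bad-reduction locus and the hypersurface $\{a=0\}$; but a specialization of $R[t_1,\dots,t_d]$ only factors through $A=R[t_1,\dots,t_d]/\mathfrak a$ if the chosen point lies \emph{on} $V(\mathfrak a)$, and a Schwartz--Zippel count over affine space says nothing about the existence of $\mathbb F_q$-points of $V(\mathfrak a)$ off a given hypersurface (when $\mathfrak a\neq 0$ a generic point of $\mathbb A^d(\mathbb F_q)$ misses $V(\mathfrak a)$ entirely). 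The standard repair --- and, as far as one can tell, the role of Gauss's counting lemma in \cite{BM13} --- is to use the Noether normalization you mention only in passing: take $B=R[u_1,\dots,u_e]\subseteq A$ with $A$ finite over $B$, replace $\bar a$ by the constant term $c_0=\pm N(\bar a)\in B$ of the characteristic polynomial of multiplication by $\bar a$ (so $c_0\in \bar a A\cap B$ is nonzero of controlled degree and height), and then find a maximal ideal $\mathfrak m_B$ of small norm with $c_0\notin\mathfrak m_B$ by counting irreducible polynomials over $\mathbb F_p$ (Gauss) or points of $\mathbb A^e(\mathbb F_q)$ off $\{c_0=0\}$; any maximal ideal of $A$ over $\mathfrak m_B$ then misses $\bar a$ and has residue field of size $|B/\mathfrak m_B|^{O(1)}$. (Alternatively one can invoke an effective Lang--Weil bound for $V(\mathfrak a)$, but that is heavier machinery than the cited proof uses.) With that substitution your degree/height bookkeeping and the final estimate $D_G(g)\le |GL(m,\mathbb F)|\le |\mathbb F|^{m^2}=O(n^b)$ go through as you describe.
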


The above results bound from above the residual finiteness growth. 
Conversely, the following states that there exist groups with arbitrary large residual finiteness growth.  

\begin{thm}[\cite{BS13b}] \label{rf-neumann}
	For any function $f\colon \N\rightarrow \N$, 
	there exists a residually finite group $G$ and a two element generating set $S$ for $G$,
	such that $F_{G}^S(n) \geq f(n)$ for all $n\geq 8$.
\end{thm}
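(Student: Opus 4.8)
\medskip

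\noindent\textbf{Proof strategy.}
I would realize $G$ as a two‑generated subgroup of an infinite product of alternating groups, choosing the degrees of those alternating groups only at the very end, once it is clear how much room $f$ demands. Fix, for the moment abstractly, a strictly increasing sequence $5\le d_1<d_2<\cdots$, put $Q_i=\mathrm{Alt}(d_i)$, and aim to construct a symmetric two‑element generating set $S=\{a^{\pm1},b^{\pm1}\}$ of a group
$$\textstyle\bigoplus_{i\ge1}Q_i\ \le\ G:=\langle a,b\rangle\ \le\ \prod_{i\ge1}Q_i,$$
where $a=(a_i)_i$, $b=(b_i)_i$ with $\langle a_i,b_i\rangle=Q_i$ for every $i$, and with one further crucial property described below. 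Since distinct alternating groups of degree $\ge5$ are non‑isomorphic non‑abelian simple groups, an iteration of Goursat's lemma forces the projection of $G$ onto every finite sub‑product $\prod_{i\in F}Q_i$ to be surjective; in particular $G$ is residually finite, being a subgroup of the residually finite group $\prod_i Q_i$.

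\medskip

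\noindent\emph{Depth lower bound.} Granting $\bigoplus_i Q_i\le G$, I claim every $g\in Q_{i_0}\setminus\{e\}$ (viewed inside $G$ as an element supported on the single coordinate $i_0$) has $D_G(g)=|Q_{i_0}|=d_{i_0}!/2$. Indeed, if $N\lhd G$ has finite index then $N\cap\bigoplus_iQ_i$ is a finite‑index, $G$‑normal subgroup of $\bigoplus_iQ_i$; since each $N\cap Q_i\lhd Q_i$ is $1$ or $Q_i$, and for $i$ with $N\cap Q_i=1$ the subgroup $N$ centralizes $Q_i$, one gets that $T:=\{i:Q_i\le N\}$ is cofinite and $N\le C_G\big(\bigoplus_{i\notin T}Q_i\big)=G\cap\prod_{i\in T}Q_i=\ker\big(G\to\prod_{i\notin T}Q_i\big)$, a subgroup of index $\prod_{i\notin T}|Q_i|$. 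Hence $[G:N]\ge\prod_{i\notin T}|Q_i|$; and if $N$ detects $g$ then $i_0\notin T$ (else $g\in Q_{i_0}\le N$), so $[G:N]\ge|Q_{i_0}|$. The reverse inequality is immediate from $\ker(G\to Q_{i_0})$, proving the claim.

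\medskip

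\noindent\emph{The crux.} The real content of the construction is to produce the generating pair $(a,b)$ so that $G$ simultaneously contains $\bigoplus_iQ_i$ \emph{and} has the property that for every $i$ there is a word $w_i$ in $a,b$ of length $|w_i|\le\psi(i)$, for some function $\psi$ depending only on $i$ (and \emph{not} on the degrees $d_j$), representing a non‑trivial element of $Q_i$. This is where I expect essentially all the difficulty to lie: $G\supseteq\bigoplus_iQ_i$ requires the coordinates of $a,b$ to be independent enough that every finitely supported tuple is a word in $a,b$, while the existence of \emph{short} such words coordinate by coordinate requires a ``telescoping'' coupling between consecutive blocks $Q_i,Q_{i+1}$; reconciling these opposing demands inside one two‑element generating set is the heart of the argument. (One plausible route is to engineer the $i$‑th block so that $Q_i$ is swept out by commutators of the form $[a^kba^{-k},b]$ with $k\le\psi(i)$, while keeping enough independence across blocks to retain $\bigoplus_iQ_i\le G$.)

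\medskip

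\noindent\emph{Conclusion.} Suppose such an $S$ has been built. We may take $\psi$ non‑decreasing (replace $\psi(i)$ by $\max_{j\le i}\psi(j)$) and, replacing $f$ by $n\mapsto\max_{k\le n}f(k)$, we may take $f$ non‑decreasing. The sequence $(\psi(i))_i$ is now fixed independently of the $d_j$, so we may choose the $d_i$ recursively, requiring at stage $i$ that $d_i>d_{i-1}$ and $d_i!/2\ge f\big(\psi(i+1)\big)$. For any $n$ with $\psi(i)\le n<\psi(i+1)$ the element $w_i$ lies in $B_G^S(n)$, so by the depth bound
$$F_G^S(n)\ \ge\ D_G(w_i)\ =\ d_i!/2\ \ge\ f\big(\psi(i+1)\big)\ \ge\ f(n).$$
After adjusting the finitely many initial $d_i$ so that the intervals $[\psi(i),\psi(i+1))$ exhaust $[8,\infty)$, this yields $F_G^S(n)\ge f(n)$ for all $n\ge8$, as desired. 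In summary: the depth computation (which rests only on simplicity of the $Q_i$) and the final recursion are routine, and the genuine obstacle is the explicit construction of the telescoping two‑element generating set realizing $\bigoplus_iQ_i\le G\le\prod_iQ_i$ with size‑independent short words in each coordinate.
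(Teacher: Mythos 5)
Your outline follows the same general route as the source the paper cites for this statement (the paper itself gives no proof of Theorem~\ref{rf-neumann}; it only records that the proof in \cite{BS13b} embeds a finitely generated group into an infinite product of finite simple groups). Your peripheral steps are sound: the depth computation for elements supported on one coordinate is correct (normality of each coordinate copy $Q_i$ in $\prod_j Q_j$, simplicity and triviality of centres forcing $N\le\ker(G\to\prod_{i\notin T}Q_i)$, surjectivity of finite sub-projections via Goursat), and the final recursive choice of the degrees $d_i$ against $f$ and $\psi$ is fine.

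However, there is a genuine gap, and you name it yourself: you never construct the two-element generating pair $(a,b)$ realizing $\bigoplus_i Q_i\le G\le\prod_i Q_i$ together with words of length $\psi(i)$, with $\psi$ independent of the degrees $d_j$, hitting each coordinate nontrivially. That construction is not a technical footnote --- it is the entire content of the theorem; everything else in your argument is routine bookkeeping around it. The standard way to fill it is B.~H.~Neumann's classical family: take $d_i$ odd and increasing, $a_i=(1\,2\,3)$ a fixed $3$-cycle and $b_i=(1\,2\,\dots\,d_i)$ a full cycle in $\mathrm{Alt}(d_i)$. Then $\langle a,b\rangle$ contains the direct sum, and the commutators $[a,\,b^{k}ab^{-k}]$ vanish in every coordinate with $d_i>k+3$ (the supports of $a_i$ and $b_i^k a_i b_i^{-k}$ become disjoint) while remaining nontrivial in small-degree coordinates; this is exactly the ``telescoping'' you gesture at, and it yields short words supported on finitely many coordinates with length controlled independently of the later $d_j$. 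One further simplification: you do not actually need words supported on a \emph{single} coordinate --- your own centralizer argument shows that an element supported on a finite set $F$ of coordinates has depth at least $\min_{i\in F}|Q_i|$, which suffices for the lower bound on $F_G^S$. Until the generating pair and the short-word estimate are written down and verified (in particular, that $\bigoplus_i Q_i\le\langle a,b\rangle$, which requires an argument, e.g.\ via the $2$-transitivity of $\mathrm{Alt}(d_i)$ and the non-isomorphism of the distinct simple factors), the proof is not complete.
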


\noindent
The proof of Theorem~\ref{rf-neumann} in \cite{BS13b} involves an explicit construction of a finitely generated group embedded in an infinite product of finite simple groups.


\subsection{Random walks on groups}\label{sec:RW}

Let $G$ be a finitely generated group with a finite symmetric generating set $S=\{s_1, \dots, s_k\}$, \emph{i.e.} such that $S^{-1}=S$. A random walk $(X_n)_{n\geq 0}$ on $G$ is a Markov chain with state space $G$ and such that $X_0=e_G$ and $X_{n+1}=X_n\cdot Y_n$ for $n\geq 0$ where $Y_0, Y_1, \dots$ are independent and uniform in $\{s_1,\dots, s_k\}$.

If $G$ is finite with $|G|=m$, one may consider the transition matrix $P$ of the random walk  $(X_n)_{n\geq 0}$ on $G$ defined by
\begin{align*}
	P(x,y) =\frac{1}{|S|}\sum_{s \in S} {\mathbf 1}_{\{y = x s\}},
\end{align*}
where $\mathbf 1_{\{y = x s\}} = 1$ if $y = xs$ and $0$ otherwise. 
It is simply the adjacency matrix of the Cayley graph $\Cay(G,S)$, normalized by $\frac{1}{|S|}$. 
The generating set is considered symmetric so as to have an un-oriented Cayley graph, or equivalently to have $P$ symmetric. 

 Let $1=\lambda_1 \geq \dots \geq \lambda_m \geq -1$ be the eigenvalues of $P$ and $x_1, \dots, x_m$ be a basis of orthonormal eigenvectors of $P$ (such a basis necessarily exists since $P$ is real and symmetric). Let $\sigma$ be an initial distribution on $G$ seen as a probability vector of dimension $m$, and let $p_u=(\frac{1}{m},\dots, \frac{1}{m})$ be the uniform distribution on $G$.
It is well-known that the distribution of such random walk converges to the uniform distribution whenever the graph is assumed to not be bipartite.
For a general convergence statement one considers a \emph{lazy} random walk instead; that is a walk with transition matrix $L=\frac{1}{2} I+ \frac{1}{2} P$. The lazy random walk at time $n$ takes a step of the original random walk with probability $\frac{1}{2}$ and stays at the current vertex with probability $\frac{1}{2}$.
Notice that the eigenvectors of $L$ are $x_1, \dots, x_m$ and the corresponding eigenvalues are all non-negative $\mu_1=\frac{1}{2}+\frac{1}{2}\lambda_1=1 > \mu_2=\frac{1}{2}+\frac{1}{2}\lambda_2\geq \dots \geq \mu_m=\frac{1}{2}+\frac{1}{2}\lambda_m \geq 0$.

\begin{lem} 
	Let $G$ be a finite group with a finite symmetric generating set $S$. 
	With the above notation $||\sigma L^n-p_u||_2 \leq \mu_2^n$. In particular, $|\sigma L^n (g) - \frac{1}{m}|\leq \mu_2^n$ for every $g\in G$.
	\label{finite-unif}
\end{lem}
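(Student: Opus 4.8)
The plan is to expand $\sigma$ in the orthonormal eigenbasis of $L$ and track how each coordinate decays under iteration. Write $\sigma = \sum_{i=1}^m c_i x_i$ with $c_i = \langle \sigma, x_i\rangle$. Since $L$ is doubly stochastic (each row and column of $P$ sums to $1$ because $S$ is symmetric), the all-ones vector is fixed by $L$, so $x_1 = \frac{1}{\sqrt m}(1,\dots,1)$ and $c_1 = \langle \sigma, x_1\rangle = \frac{1}{\sqrt m}\sum_g \sigma(g) = \frac{1}{\sqrt m}$ because $\sigma$ is a probability vector. The key observation is then that $p_u = \frac{1}{m}(1,\dots,1) = \frac{1}{\sqrt m}\, x_1 = c_1 x_1$, i.e. $p_u$ is exactly the projection of $\sigma$ onto the top eigenspace.

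Next I would apply $L^n$: since $L x_i = \mu_i x_i$, we get $\sigma L^n = \sum_{i=1}^m c_i \mu_i^n x_i$, and therefore
\begin{align*}
\sigma L^n - p_u = \sum_{i=2}^m c_i \mu_i^n x_i .
\end{align*}
Because the $x_i$ are orthonormal and $0 \le \mu_i \le \mu_2$ for all $i \ge 2$, Parseval gives
\begin{align*}
\|\sigma L^n - p_u\|_2^2 = \sum_{i=2}^m c_i^2 \mu_i^{2n} \le \mu_2^{2n}\sum_{i=2}^m c_i^2 \le \mu_2^{2n}\sum_{i=1}^m c_i^2 = \mu_2^{2n}\,\|\sigma\|_2^2 \le \mu_2^{2n},
\end{align*}
where the last inequality uses $\|\sigma\|_2 \le \|\sigma\|_1 = 1$ for a probability vector. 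Taking square roots yields $\|\sigma L^n - p_u\|_2 \le \mu_2^n$. The pointwise bound follows immediately, since for any fixed $g$ the quantity $|\sigma L^n(g) - \tfrac1m|$ is the absolute value of a single coordinate of $\sigma L^n - p_u$, hence at most its $\ell^2$ norm.

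There is no serious obstacle here; this is the standard spectral mixing estimate. The only points requiring care are the two structural facts invoked at the start — that $L$ is symmetric with nonnegative eigenvalues (already recorded in the paragraph preceding the lemma) and that $p_u$ coincides with the top-eigenspace component of $\sigma$, which is where symmetry of $S$ (hence double stochasticity) is used. One should also note $\mu_2 < 1$ strictly, which holds because $\Cay(G,S)$ is connected ($S$ generates $G$), so the bound is genuinely a contraction; but the inequality as stated holds regardless.
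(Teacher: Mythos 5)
Your proof is correct and follows essentially the same route as the paper: expand $\sigma$ in the orthonormal eigenbasis of $L$, identify $p_u$ with the projection onto the top eigenvector, and bound the remaining terms by $\mu_2^n\|\sigma\|_2 \le \mu_2^n$ using $\|\sigma\|_2 \le \|\sigma\|_1 = 1$. The only addition is your explicit justification of the pointwise bound from the $\ell^2$ bound, which the paper leaves implicit; nothing further is needed.
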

\begin{proof}
For $n\geq 1$, the matrix $\sigma \cdot L^n$ is a probability distribution and it represents the distribution of the $n$-the step of the lazy random walk on $G$ that starts at a random vertex selected according to $\sigma$.

We write $\sigma=\alpha_1 x_1+ \alpha_2 x_2+\dots+\alpha_m x_m$, with $\alpha_1,\dots,\alpha_m \in \mathbb{R}$. Since $x_1,\dots, x_m$ are eigenvectors, we have $\sigma L^n= \alpha_1\mu_1^n x_1+\alpha_2 \mu_2^n x_2+\dots + \alpha_m\mu_m^n x_m$.
Notice that $\mu_1=1$, $x_1=\frac{1}{\sqrt{m}}(1, \dots, 1)$ and $\alpha_1=\sigma \cdot x_1^T=\frac{1}{\sqrt{m}}$ which implies that $\alpha_1 x_1=(\frac{1}{m}, \dots, \frac{1}{m}) = p_u$.
We deduce that
$$
	\big\|\sigma L^n-p_u\big\|_2=\big\|\alpha_2 \mu_2^n x_2+\dots + \alpha_m\mu_m^n x_m\big\|_2
	\leq \max_{i=2,\dots, m} |\mu_i|^n\cdot \sqrt{\alpha_2^2+\dots + \alpha_m^2}
	\leq \mu_2^n \cdot \|\sigma\|_2.
$$
In the last line we used the orthonormality of the base $x_1,\dots, x_m$. Finally, $\|\sigma\|_2 \leq \sum_{i=1}^m\sigma_i = 1$, which shows that the above is bounded by $\mu_2^n $ as required. 
\end{proof}

\begin{cor}	\label{cor:uniform}
	Let $G$ be a finitely generated infinite group with a finite symmetric generating set $S$ 
	and let $N$ be a normal subgroup of $G$ of finite index. 
	Consider the lazy random walk $(X_n)_{n\geq0}$ on $\Cay(G,S)$. Then
	\begin{align}\label{eq:uniform}
		\Big|\mathbb{P}(X_n\in N)-\frac{1}{|G:N|}\Big| \leq \mu^n,
	\end{align}
	where $\mu$ is the second largest eigenvalue of the transition matrix of $\tilde{X_n}$, 
	the lazy random walk on $\Cay(G/N, S)$ induced by  $(X_n)_{n\geq 0}$.
	Moreover, 
	\begin{align}\label{eq:uniform2}
		\mathbb{P}(X_n\in N \setminus \{e\})\xrightarrow[n\to\infty]{}\frac{1}{|G:N|}.
	\end{align}
\end{cor}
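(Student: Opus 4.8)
The plan is to deduce everything from Lemma~\ref{finite-unif} applied to the finite quotient group $G/N$. First I would observe that the random walk $(X_n)_{n\geq 0}$ on $\Cay(G,S)$ projects, under the quotient map $\pi\colon G \to G/N$, to a lazy random walk $(\tilde X_n)_{n\geq 0}$ on $\Cay(G/N, \pi(S))$ started at the identity coset: indeed, $\pi$ is a group homomorphism, so $\pi(X_{n+1}) = \pi(X_n)\pi(Y_n)$ with $\pi(Y_n)$ uniform in $\pi(S)$, and the laziness is inherited. The key translation is that the event $\{X_n \in N\}$ is exactly the event $\{\tilde X_n = \bar e\}$, where $\bar e$ is the identity of $G/N$. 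Hence $\mathbb P(X_n \in N) = \sigma L^n(\bar e)$ where $\sigma = \delta_{\bar e}$ is the initial distribution concentrated at $\bar e$ and $L$ is the lazy transition matrix on $G/N$, which has order $m = [G:N]$. One caveat: $\pi(S)$ may contain the identity or have repeated elements, so $\tilde X_n$ is a lazy walk with a possibly larger holding probability, but this only makes all eigenvalues $\mu_i$ nonnegative, so Lemma~\ref{finite-unif} still applies verbatim; alternatively one runs the lazy walk on $\Cay(G/N, S)$ with the induced (multi-)generating set as stated in the corollary.

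Given this, inequality~\eqref{eq:uniform} is immediate: Lemma~\ref{finite-unif} gives $|\sigma L^n(\bar e) - \tfrac1m| \leq \mu_2^n$, where $\mu_2 = \mu$ is the second largest eigenvalue of $L$, which is exactly the claimed bound $|\mathbb P(X_n \in N) - \tfrac{1}{[G:N]}| \leq \mu^n$. Here I would note that since $G$ is infinite and $N$ has finite index, the quotient $G/N$ is a nontrivial finite group, so $m \geq 2$; and since $S$ is symmetric and generates $G$, its image generates $G/N$, so the lazy walk on $G/N$ is irreducible and aperiodic, forcing $\mu < 1$. This strict inequality is what makes the bound useful in the limit.

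For~\eqref{eq:uniform2}, write $\mathbb P(X_n \in N \setminus\{e\}) = \mathbb P(X_n \in N) - \mathbb P(X_n = e)$. The first term converges to $\tfrac{1}{[G:N]}$ by~\eqref{eq:uniform} and $\mu<1$. For the second term, I would argue that $\mathbb P(X_n = e) \to 0$ because $G$ is infinite: the lazy random walk on a Cayley graph of an infinite group returns to the origin with vanishing probability, which one can see, for instance, by noting that for any finite index normal subgroup $N'$ the quantity $\mathbb P(X_n = e)$ is at most $\mathbb P(X_n \in N')$, which is close to $\tfrac{1}{[G:N']}$ for large $n$ by the already-proven~\eqref{eq:uniform}; letting $[G:N']$ grow (using that an infinite residually finite group — or indeed any infinite finitely generated group, via its finite quotients if residually finite, or directly by transience/recurrence arguments — has finite index subgroups of arbitrarily large index) shows $\limsup_n \mathbb P(X_n = e) = 0$. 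Combining, $\mathbb P(X_n \in N\setminus\{e\}) \to \tfrac{1}{[G:N]}$.

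The main obstacle I anticipate is handling $\mathbb P(X_n = e) \to 0$ cleanly without circularity: the slick argument above needs $G$ to have finite quotients of unbounded size, which holds for residually finite $G$ but should be addressed for general infinite $G$. The cleanest fix is to invoke the standard fact that a lazy random walk on an infinite, locally finite, vertex-transitive graph has $\mathbb P(X_n = x) \to 0$ for every vertex $x$ (a consequence of, e.g., the walk's convolution powers tending to zero in $\ell^\infty$ on an infinite group, or of the spectral radius argument on $\ell^2(G)$), and I would cite this rather than reprove it. Everything else is a routine translation through the quotient homomorphism plus the spectral estimate of Lemma~\ref{finite-unif}.
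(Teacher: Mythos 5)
Your proposal is correct and follows essentially the same route as the paper: identify $\{X_n\in N\}$ with the projected lazy walk on $G/N$ hitting the identity, apply Lemma~\ref{finite-unif}, note $\mu<1$, and handle~\eqref{eq:uniform2} by subtracting $\mathbb{P}(X_n=e)$, which vanishes by the standard fact about return probabilities on infinite groups (the paper cites Varopoulos for exactly this). The extra detour you consider via finite quotients of unbounded index is unnecessary, and you correctly settle on the citation the paper itself uses.
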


\begin{proof}
	To prove~\eqref{eq:uniform} it suffices to observe that $\mathbb{P}(X_n\in N)=\mathbb{P}(\tilde{X_n} = e_N)$ 
	and conclude by Lemma~\ref{finite-unif}.
	Let us now show~\eqref{eq:uniform2}.
	
	It is a standard fact (see for instance \cite[Thm. VI.3.3, Thm. VI.5.1]{Varopoulos_book}) that, since $G$ is infinite,  
	$$ \mathbb{P}(X_n = e) \to 0, \qquad \text{as $n\to \infty$}. $$
	Moreover, as discussed above, the eigenvalue $\mu$ appearing in~\eqref{eq:uniform} is strictly smaller than $1$. 
	These two facts, together with~\eqref{eq:uniform}, imply~\eqref{eq:uniform2}.
\end{proof}

In the rest of the paper, we will always consider lazy random walks as described above. 
Straightforward generalisations are possible, such as to random walks with non-uniform symmetric transition probabilities --
that is, walks taking steps according to a finitely supported, symmetric probability on $G$, 
with $e$ having a positive probability (which is to say that the walk has, at any given step, a positive probability of staying at the same place). 
Certain transition probabilities with infinite support (but finite first moment) may also be treated, 
but small complication arise in specific parts of the proof.
For the sake of legibility, we limit ourselves to the simple framework of uniform probabilities on symmetric generating sets. 

\subsection{Asymptotic density}

For a given infinite group $G$ generated by a finite set $S$, consider the so-called \emph{asymptotic density} (as defined in \cite{BV02}) of a subset $X$ in $G$ defined as follows
\begin{align}\label{eq:rho}
	\rho_S(X)=\limsup_{n\rightarrow \infty}\frac{|X\cap B_G^S(n)|}{|B_G^S(n)|}.
\end{align}

If $G$ satisfies 
\begin{equation} 
	\lim_{n\rightarrow\infty}\frac{|B_G^S(n+1)|}{|B_G^S(n)|}=1
\label{cond1}
\end{equation}
then by \cite{BV02} $\rho_S$ is left- and right-invariant; and in particular, $\rho(H)=\frac{1}{|G:H|}$ for a finite index subgroup $H$. 
Moreover, if (\ref{cond1}) holds, the $\limsup$ in~\eqref{eq:rho} is actually a limit. 

Condition~\eqref{cond1} holds for groups of polynomial growth (as a consequence of \cite{Pansu08}).
Thus, for all $k\geq 2$, recalling the random variables $X_n$ and $Z_n$ defined in the introduction,
\begin{align*}
	\lim_{n\to\infty} \mathbb P (X_n \in \Lambda_k\setminus\{e\}) = \lim_{n\to\infty} \mathbb P (Z_n \in \Lambda_k\setminus\{e\}) = \frac{1}{|G:\Lambda_k|}.
\end{align*}

For groups with exponential growth however condition~\eqref{cond1} fails, 
and the second limit in the above display does not necessarily exist.
We give next an example for $G= {\rm F}_{\{a,b\}}$, the free group generated by two elements $\{a,b\}$ 
and for a normal subgroup $N \lhd {\rm F}_{\{a,b\}}$.  
Take $S = \{a,b,a^{-1}, b^{-1}\}$.
For $g \in {\rm F}_{\{a,b\}}$, let $\|g\|$ be the word-length of $g$, 
that is, the graph distance from $g$ to $e$ in $\Cay( {\rm F}_{\{a,b\}}, S)$. 
Set 
$$N = \{g \in {\rm F}_{\{a,b\}} :\, \|g\| \in 2 \NN\}.$$
It is straightforward to check that $N$ is a normal subgroup of ${\rm F}_{\{a,b\}}$ of index $2$. 
However, 
\begin{align*}
	\frac{|N\cap B_{{\rm F}_{\{a,b\}},S}(n)|}{|B_{{\rm F}_{\{a,b\}},S}(n)|} = 
	\begin{cases}
		\displaystyle\frac{3^{n+1} -1}{4 \cdot 3^{n} - 2}, \qquad &\text{ for $n$ even},\medskip \\
		\displaystyle\frac{3^{n} -1}{4 \cdot 3^{n} - 2}, \qquad &\text{ for $n$ odd}.
	\end{cases}
\end{align*}
It is immediate from the above that $\PP(Z_n \in N)$ does not converge when $n\to \infty$. 

The above example, together with Corollary~\ref{cor:uniform}, 
explains the choice of the random walk $(X_n)_{n \geq 0}$ rather than of the uniform variables $(Z_n)_{n \geq 0}$ on $B_G^S(n)$. 

Another reason for this choice relates to sampling. 
Suppose we have sampled an instance of the variable $Z_n$ for some $n \geq 0$. 
In order to then obtain a sample of $Z_{n+1}$, one needs to restart the relatively costly process of sampling a uniform point in $B_G^S(n+1)$. 
For the random walk however, if $X_n$ is simulated for some $n \geq 0$, $X_{n+1}$ 
is easily obtained by multiplying $X_n$ with a random element in $S$. 
This makes the sampling of a sequence $(X_1,X_2,\dots)$ much easier than that of a sequence $(Z_1,Z_2,\dots)$. 

\section{Residual average}

We mentioned in the introduction that our goal is to compute the ``average'' depth of an element in $G$. 
In addition to the two methods proposed above, that is taking the limit of $\EE[D_G(X_n)]$ or $\EE[D_G(Z_n)]$, 
one may compactify $G$ so that it has a Haar probability measure and take the average depth with respect to it. 
The natural way to render $G$ compact is by considering its {\em profinite completion}, which we will denote by $\widehat{G}$. 
It is a compact group, with a unique uniform Haar measure which we denote by $\mu$.  
The depth function $D_G$ may be extended by continuity to $\widehat G$, and we call $D_{\widehat G}$ this extension. 
Then the \emph{residual average} of $G$, denoted by $\Ave(G)$, is
\[\Ave(G) := \int_{\widehat{G}} D_{\widehat{G}}d\mu. \] 
For details of the profinite completion construction see \cite{MR1691054}.
For further details of the residual average construction see \cite{BM10}.

%

\begin{lem}
	For any linear group $G$,
	\begin{align}\label{eq:linear-lemma}
		 \Ave(G) = 2 + \sum_{k=2}^{\infty} \frac{1}{|G:\Lambda_k|} .
	\end{align} 
	\label{linear-lemma}
\end{lem}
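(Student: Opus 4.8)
The plan is to compute $\Ave(G)=\int_{\widehat G}D_{\widehat G}\,d\mu$ by the layer-cake formula and to identify each super-level set $\{D_{\widehat G}>k\}$ with the closure of $\Lambda_k$ in $\widehat G$. The hypothesis of linearity enters only through Mal'cev's theorem, which makes $G$ residually finite; this is what is needed for $G$ to embed densely in $\widehat G$ and for $D_{\widehat G}$ to be defined as in \cite{BM10}. I assume throughout that $G$ is infinite (for finite $G$ the right-hand side diverges while $\Ave(G)$ does not, so the statement is to be read for infinite $G$).

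First I would invoke the standard dictionary for the profinite completion: taking closures, $N\mapsto\overline N$, is an index-preserving bijection from the finite-index normal subgroups of $G$ onto the open normal subgroups of $\widehat G$. Since $G$ is finitely generated there are only finitely many normal subgroups of index at most $k$, so $\Lambda_k$ is normal of finite index, and $\overline{\Lambda_k}$ is exactly the intersection of the (finitely many) open normal subgroups of $\widehat G$ of index at most $k$; in particular $\overline{\Lambda_k}$ is open with $[\widehat G:\overline{\Lambda_k}]=[G:\Lambda_k]$. Since $D_{\widehat G}(x)>k$ holds precisely when $x$ lies in every open normal subgroup of $\widehat G$ of index at most $k$, this yields
\[
\{x\in\widehat G:\ D_{\widehat G}(x)>k\}=\overline{\Lambda_k}\setminus\{e\}\qquad(k\ge0).
\]
As $G$ is infinite and residually finite, $\widehat G$ is infinite, so $\mu(\{e\})=0$ and this set has measure $\mu(\overline{\Lambda_k})$. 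I would then use that an open subgroup $H$ of a compact group has Haar measure $1/[\widehat G:H]$ — its finitely many cosets partition $\widehat G$ into translates of equal measure — to get $\mu(\overline{\Lambda_k})=1/[G:\Lambda_k]$.

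To finish, by monotone convergence $\Ave(G)=\sum_{k\ge0}\mu(\{D_{\widehat G}>k\})=\sum_{k\ge0}1/[G:\Lambda_k]$; the terms $k=0$ and $k=1$ contribute $1+1=2$ because $\Lambda_0=\Lambda_1=G$, and the rest give $\sum_{k\ge2}1/[G:\Lambda_k]$. The step I expect to need the most care is the identification of $\{D_{\widehat G}>k\}$ with $\overline{\Lambda_k}$ up to the $\mu$-null point $e$ — that is, verifying that the continuous extension of $D_G$ to $\widehat G$ agrees with the intrinsic profinite depth there (and in particular is locally constant off $e$). The remaining ingredients, namely the coset computation of the Haar measure of an open subgroup and the layer-cake summation, are routine.
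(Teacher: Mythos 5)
Your proof is correct and follows essentially the same route as the paper: the paper's explicit double-sum/telescoping manipulation of $\sum_k k\,[\mu(\Lambda_{k-1})-\mu(\Lambda_k)]$ is exactly the layer-cake identity $\Ave(G)=\sum_{k\ge0}\mu(\{D_{\widehat G}>k\})$ you invoke, combined with the same identification $\mu(\overline{\Lambda_k})=1/[G:\Lambda_k]$. Your extra care about the null point $e$, the case of finite $G$, and the local constancy of $D_{\widehat G}$ only makes explicit what the paper leaves implicit.
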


Note that $\Ave(G)$ in~\eqref{eq:linear-lemma} is equal to the limit in~\eqref{eq:goal}.

\begin{proof}
Recall the fact that we have conveniently defined $\Lambda_0 = \Lambda_1 = G$ and therefore that $\mu(\Lambda_0) = \mu(\Lambda_1) = 1$. 
The residual average is then 
\begin{align*}
	\Ave(G)
	& =\sum_{k=1}^{\infty} k\cdot [\mu(\Lambda_{k-1})-\mu(\Lambda_k)] 
	  = \sum_{ k \geq 1} \sum_{\ell =0}^{k-1} [\mu(\Lambda_{k-1})-\mu(\Lambda_k)]\\
	& = \sum_{\ell \geq 0} \sum_{ k > \ell } [\mu(\Lambda_{k-1})-\mu(\Lambda_k)]\\
	& = \sum_{\ell \geq 0} \mu(\Lambda_{\ell})
\end{align*}
We are authorized to change the order of summation in the third equality, since all the terms in the sum are non-negative. 
In the last equality, we have used the telescoping sum and the fact that $\mu(\Lambda_k) \to 0$ as $k \to \infty$. 
The latter convergence is due to $G$ being residually finite. 

The first two terms in the last sum above are equal to $1$; for $\ell \geq 2$, $\mu(\Lambda_{\ell}) = \frac{1} {|G:\Lambda_\ell|}$.
The lemma follows immediately.
\end{proof}

The following theorem, taken from \cite{BM10}, will be necessary when proving Theorem~\ref{kazhdan}.

\begin{thm}[Theorem 1.4 \cite{BM10}]\label{thm:linear-finite average}
Let $\Gamma$ be any finitely generated linear group.
Then the residual average of $\Gamma$ is finite.
\end{thm}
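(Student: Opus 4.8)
The plan is to reduce the statement, via Lemma~\ref{linear-lemma}, to a lower bound on the intersection growth $[\Gamma:\Lambda_k]$, and to extract such a bound from the known upper bounds on residual finiteness growth. We may assume $\Gamma$ is infinite (if $\Gamma$ is finite then $\widehat\Gamma=\Gamma$, $D_{\widehat\Gamma}$ is bounded, and there is nothing to prove). By Lemma~\ref{linear-lemma} it suffices to show that $\sum_{k\geq 2}1/[\Gamma:\Lambda_k]<\infty$. Fix any finite symmetric generating set $S$ of $\Gamma$ (note that $\Ave(\Gamma)$ does not depend on $S$) and write $B(r)$ for the ball of radius $r$ in $\Cay(\Gamma,S)$. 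The key elementary observation is that $\Lambda_k$ contains no short nontrivial elements: if $g\in\Lambda_k\setminus\{e\}$ then $D_\Gamma(g)>k$, so $F_\Gamma^S(\|g\|)\geq D_\Gamma(g)>k$, whence $\|g\|>r_k$, where $r_k:=\max\{r:F_\Gamma^S(r)\leq k\}$. Consequently the natural map $B(\lfloor r_k/2\rfloor)\to\Gamma/\Lambda_k$ is injective — two points at distance at most $r_k$ lying in the same coset would differ by a nontrivial element of $\Lambda_k$ of length at most $r_k$ — and therefore
\[ [\Gamma:\Lambda_k]\ \geq\ \bigl|B(\lfloor r_k/2\rfloor)\bigr| . \]
It then remains to check that $r_k$ grows fast enough that the right-hand side has summable reciprocals, and for this I split into two cases, using that by the Tits alternative and the Milnor--Wolf theorem a finitely generated linear group has either polynomial or exponential growth.

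If $\Gamma$ has exponential growth, then $|B(\rho)|\geq a^\rho$ for some $a>1$ and all large $\rho$. The quantitative form of Mal'cev's theorem for linear groups quoted above gives $F_\Gamma^S(n)\leq Cn^b$ for suitable constants $C,b$, hence $r_k$ is at least a constant times $k^{1/b}$ for large $k$. Combining these with the displayed inequality yields $[\Gamma:\Lambda_k]\geq\exp(c_1k^{1/b})$ for large $k$ with $c_1>0$, and $\sum_k\exp(-c_1k^{1/b})<\infty$.

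If $\Gamma$ does not have exponential growth, then by Gromov's theorem it has a finitely generated nilpotent subgroup $H$ of finite index, of Hirsch length $h=h(\Gamma)$. Theorem~\ref{rf-nilp} gives $F_H^{S'}(n)\leq C'(\log n)^h$ for a generating set $S'$ of $H$, and a standard comparison of residual finiteness growth along the finite-index inclusion $H\leq\Gamma$ (replace a normal subgroup of $H$ detecting an element of $H$ by its normal core in $\Gamma$, whose index is bounded by a fixed power of the original one, and detect elements outside $H$ using the normal core of $H$) upgrades this to $F_\Gamma^S(n)\leq C''(\log n)^N$ for some integer $N$. Hence $r_k\geq\exp(c_2k^{1/N})$ for large $k$, and since $\Gamma$ is infinite $|B(\rho)|\geq\rho+1$, so the displayed inequality gives $[\Gamma:\Lambda_k]\geq\tfrac12\exp(c_2k^{1/N})$, again with summable reciprocals. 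Together with Lemma~\ref{linear-lemma}, the two cases prove $\Ave(\Gamma)<\infty$.

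The main obstacle is the non-exponential (virtually nilpotent) case: the polynomial estimate $F_\Gamma^S(n)\leq Cn^b$ does not suffice there — for $\Gamma=\ZZ$ it would only give $[\Gamma:\Lambda_k]$ at least a constant times $k^{1/b}$, with divergent reciprocal sum — so one genuinely needs the much stronger polylogarithmic bound of Theorem~\ref{rf-nilp}, together with Gromov's theorem and a (routine but slightly delicate) transfer of a residual finiteness growth estimate from a finite-index nilpotent subgroup to $\Gamma$ itself.
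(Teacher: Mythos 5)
Your proof is correct, but it takes a genuinely different route from the paper's. The paper reproduces (and repairs) the argument of \cite{BM10}: it builds an explicit normal residual system $\Delta_j=\Lambda\cap\ker r_j$ out of congruence quotients $\GL(n,S/\mathfrak{p}_{S,j}^{k_j})$, derives the arithmetic lower bound $[\Lambda:\Lambda_k]\geq\prod_{j=1}^{k}p_j^{\ell_j-(j-1)n^2}$ on intersection growth, and controls the series defining $\int\hat M\,d\mu$ by a ratio test. You instead argue geometrically: every nontrivial element of $\Lambda_k$ has word length greater than $r_k=\max\{r:F_\Gamma^S(r)\leq k\}$, so the ball of radius $\lfloor r_k/2\rfloor$ injects into $\Gamma/\Lambda_k$ and intersection growth is bounded below by volume growth at a radius dictated by the residual finiteness growth; the Tits alternative plus Milnor--Wolf then splits the problem into the exponential-growth case, where the polynomial bound $F_\Gamma^S(n)\leq Cn^b$ of \cite{BM13} suffices, and the virtually nilpotent case, where the full polylogarithmic strength of Theorem~\ref{rf-nilp} is needed. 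Both routes are sound; the paper's is self-contained modulo the residual-system machinery of \cite{BM10} and gives explicit quantitative control of $[\Lambda:\Lambda_k]$, while yours is more modular and conceptually transparent but leans on heavier black boxes (Tits, Milnor--Wolf/Gromov, and the nilpotent polylog bound, which itself rests on the Prime Number Theorem and Hall's embedding theorem). The one step you rightly flag as delicate --- transferring the polylog bound from a finite-index nilpotent subgroup $H\leq\Gamma$ up to $\Gamma$ --- does work as you sketch: a subgroup $N\lhd H$ has at most $[\Gamma:H]$ distinct $\Gamma$-conjugates, each of index $[\Gamma:N]$, so its normal core in $\Gamma$ has index at most $[\Gamma:N]^{[\Gamma:H]}$, a fixed power of the original index; this is the safe direction of the finite-index passage that the paper warns about via \cite[Example 2.5]{BK12}, so there is no gap.
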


For completeness, we give a proof of the above. 
The present proof is based on the one in the original paper, with some adjustments meant to correct certain points. 
The main difference with the original proof is that here we focus on the connection to intersection growth.
One has to be especially careful in proving this result, as the residual finiteness growth may vary when passing to subgroups of finite index (see \cite[Example 2.5]{BK12}).

\begin{proof}
We follow the proof \cite[Theorem 1.4]{BM10}, with some changes and expansions.
According to \cite[Proposition 5.2]{BM10}, there exists an infinite representation
$$
\rho : \Gamma \to GL(n,K)
$$
for some $n$ and $K/\mathbb{Q}$ finite.
By \cite[Lemma 2.5]{BM10}, it suffices to show that the normal residual average of $\rho(\Gamma)$ is finite. Set $\Lambda = \rho (\Gamma)$ and set $S$ to be the coefficient ring of $\Lambda$.

For each $\delta >0$, from the proof of \cite[Proposition 5.1]{BM10}, there exists a normal residual system $\mathfrak{F}_\delta$ on $\Lambda$ given by $\Delta_{j} = \Lambda \cap \ker r_{j}$, where
$$
r_j  : \GL(n,S) \to \GL(n, S/\mathfrak{p}_{S,j}^{k_j}),
$$
and $[\Lambda:\Delta_j] \leq [\Lambda:\Delta_{j+1}] \leq [\Lambda:\Delta_j]^{1+\delta}$.
In addition, we have
$$
|r_{j}(\Lambda)| = O_j p_j^{\ell_{j}}
$$
where
\begin{equation} \label{Oinequality}
1 \leq O_j < p_j^{n^2}.
\end{equation}
We also have for constants $N > (n^2)!$ and $C > 4$ that
\begin{equation} \label{ellinequality}
\ell_{j} > N + C jn^2
\end{equation}
and
$$
\ell_{j} + Cn^2 < \ell_{j+1} \leq \ell_{j} + (C+1)n^2.
$$

For each $i < j$, we claim that the largest power of $p_j$ that divides $[\Lambda: \Delta_{i}]$ is $p_j^{n^2}$.
To see this claim, note that if $p_j^m$ divides $O_i p_i^{\ell_i}$, since $p_i, p_j$ are distinct primes, $p_j^m$ must divide $O_i$, however $O_i < p_i^{n^2}$ and $p_i < p_j$.
The claim follows.
Since
$$
[\Delta_i : \Delta_i \cap \Delta_j]
= 
[\Lambda : \Delta_j] [\Delta_j : \Delta_i \cap \Delta_j]/ [\Lambda : \Delta_i] 
=
O_j p_j^{\ell_j} [\Delta_j : \Delta_i \cap \Delta_j]/ [\Lambda : \Delta_i] ,
$$
the aforementioned claim implies that
$$
[\Delta_{i} : \Delta_{i} \cap \Delta_{j}] \geq p_j^{\ell_{j} - n^2}.
$$
Set $\Lambda_i := \cap_{n =1}^i \Delta_{n}$, then
$$
[\Lambda_{j-1} : \Lambda_j] 
= O_j p_j^{\ell_j} [\Delta_j : \Lambda_j]/[\Lambda: \Lambda_{j-1}], 
$$
and $[\Lambda: \Lambda_{j-1}]$ divides $[\Lambda:\Delta_1] \cdots [\Lambda:\Delta_{j-1}]$ for which $p_j^{(j-1)n^2}$ is the largest power of $p_j$ that appears as a factor.
Hence, we obtain
$$
[\Lambda_{j-1} : \Lambda_{j} ] \geq p_j^{\ell_j - (j-1)n^2}.
$$
From this, we obtain the important inequality
\begin{equation} \label{intinequality}
[\Lambda : \Lambda_k] \geq \prod_{j=1}^k p_j^{\ell_j - (j-1)n^2}.
\end{equation}

To employ~(\ref{intinequality}), we need a comparison function. This is where we deviate from the proof in \cite{BM10}.
Define, for $g \in \Lambda \setminus \{1 \}$,
$$
M(g) = \min\{ [\Lambda : \Delta_i ] : g \notin \Delta_i \}.
$$
Let $\hat M$ be the unique continuous extension of $M$ to $\hat \Gamma$.
Then clearly, $D_\Gamma(g) \leq M(g)$, and so
$$
\int \hat D_\Gamma(g) d\mu \leq \int \hat M(g) d\mu.
$$
By studying the partial sums that define $\int \hat M(g) d\mu$, we obtain, for any $n$,
\begin{eqnarray*}
\sum_{k=1}^{n} [\Lambda : \Delta_k] \mu ( \Lambda_{k-1} \setminus \Lambda_k ) &=& \sum_{k=1}^{n} [\Lambda : \Delta_k] \left(\frac{1}{[\Lambda : \Lambda_{k-1}]} - \frac{1}{[\Lambda : \Lambda_k]} \right) \\
&=& \frac{[\Lambda:\Delta_1]}{[\Lambda:\Lambda_0]}- \frac{[\Lambda:\Delta_{n}]}{[\Lambda: \Lambda_{n}]}+\sum_{k=1}^{n-1} \frac{[\Lambda : \Delta_{k+1}] - [\Lambda : \Delta_{k}] }{[\Lambda : \Lambda_{k}]}\\
&<& \frac{[\Lambda:\Delta_1]}{[\Lambda:\Lambda_0]}+\sum_{k=1}^n \frac{[\Lambda : \Delta_{k}]^{1+\delta}}{[\Lambda : \Lambda_{k}]}.
\end{eqnarray*}
The last inequality follows from the conclusion of \cite[Proposition 5.1]{BM10} that $[\Lambda:\Delta_{k+1}] \leq [\Lambda:\Delta_k]^{1+\delta}$. Applying (\ref{Oinequality}) and  (\ref{intinequality}) while plugging in the value for $[\Lambda:\Lambda_k]$ yields

\begin{eqnarray*}
\sum_{k=1}^n \frac{[\Lambda : \Delta_{k}]^{1+\delta}}{[\Lambda : \Lambda_{k}]} &\leq& \sum_{k=1}^n \frac{O_k^{1+\delta} p_k^{(1 + \delta)\ell_k}}{\prod_{j=1}^{k} p_j^{\ell_j - (j-1)n^2}} \\
&\leq& \sum_{k=1}^n \frac{p_k^{(1 + \delta)(n^2+\ell_k)}}{\prod_{j=1}^{k} p_j^{\ell_j - (j-1)n^2}} \\
&=& \sum_{k=1}^n \frac{p_k^{(k + \delta)n^2 +(\delta)\ell_k}}{\prod_{j=1}^{k-1} p_j^{\ell_j - (j-1)n^2}}.
\end{eqnarray*}
We compute the ratio $\left(\text{$k$-th term}\right)/\left(\text{$(k+1)$-th term}\right)$ of the series above:
$$ 
\frac{\frac{p_k^{(k + \delta)n^2 +(\delta)\ell_k}}{\prod_{j=1}^{k-1} p_j^{\ell_j - (j-1)n^2}}}
{\frac{p_{k+1}^{(k + \delta +1)n^2 +(\delta)\ell_{k+1}}}{\prod_{j=1}^{k} p_j^{\ell_j - (j-1)n^2}}}
=
\frac{p_k^{(1+\delta) n^2 + (\delta+1) \ell_k}}
{p_{k+1}^{(k+\delta+1)n^2 + \delta \ell_{k+1}}}
\leq 
\frac{p_k^{(1+\delta) n^2 + (\delta+1) \ell_k}}
{p_{k}^{(k+\delta+1)n^2 + \delta \ell_{k+1}}}
=
p_k^{-kn^2 + \delta (\ell_k -\ell_{k+1}) + \ell_k}
$$
Thus, if $\delta$ is sufficiently small and $k$ is sufficiently large, we have that the exponent above is greater than 1 by (\ref{ellinequality}).
Hence, as $p_k$ is an increasing sequence of integers, the ratio test implies that the resulting series above converges, and $\int \hat M(g) d\mu$ is finite.
We conclude that the normal residual average of $\Lambda$ is finite, as desired.
\end{proof}

\section{Expected depth of random walks on groups}

Fix for the whole section a finitely generated residually finite group $G$ and a finite symmetric generating set $S$.
Consider the simple lazy random walk $(X_n)_{n\geq0}$ on the Cayley graph $\Cay(G,S)$, as defined in \S\ref{sec:RW}. 
Recall that we are interested in 
\begin{equation*}
	\mathbb{E}[D_G(X_n)]
	=\sum_{k\geq 2} k\mathbb{P}[D(X_n) = k]
	=\sum_{k\geq 0} \mathbb{P}[D(X_n)>k]
	=\sum_{k\geq 0} \mathbb{P}(X_n \in \Lambda_k \setminus \{e\}).
\end{equation*}
The second equality is obtained through the same double-sum argument as in the proof of Lemma~\ref{linear-lemma}.

\subsection{First estimates}\label{sec:Fatou}

\begin{prop}\label{prop:Fatou}
	We have 
	\begin{align*}
		\liminf_{n \to \infty} \mathbb E[D(X_n)] \geq 2 + \sum_{k\geq 2} \frac{1}{|G:\Lambda_k|}.
	\end{align*}
\end{prop}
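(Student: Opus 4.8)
The plan is to prove the lower bound via Fatou's lemma applied to the countable sum representation
$$\mathbb{E}[D(X_n)] = \sum_{k \geq 0} \mathbb{P}(X_n \in \Lambda_k \setminus \{e\}).$$
Since each summand is nonnegative, Fatou's lemma for sums (i.e.\ for the counting measure on $\mathbb{N}$) gives
$$\liminf_{n \to \infty} \sum_{k \geq 0} \mathbb{P}(X_n \in \Lambda_k \setminus \{e\}) \;\geq\; \sum_{k \geq 0} \liminf_{n \to \infty} \mathbb{P}(X_n \in \Lambda_k \setminus \{e\}).$$
So the task reduces to evaluating the termwise $\liminf$, and in fact the termwise limit.

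Next I would invoke Corollary~\ref{cor:uniform}. For each fixed $k$, the subgroup $\Lambda_k$ is normal of finite index in $G$ (it is an intersection of finitely many normal subgroups of index at most $k$, hence of finite index), so~\eqref{eq:uniform2} applies and gives
$$\mathbb{P}(X_n \in \Lambda_k \setminus \{e\}) \xrightarrow[n \to \infty]{} \frac{1}{|G:\Lambda_k|}.$$
Here one uses that $G$ is infinite, so that $\mathbb{P}(X_n = e) \to 0$ and the singleton $\{e\}$ does not affect the limiting probability; if $G$ were finite the statement of the proposition is anyway trivial (or vacuous in the residual-finiteness growth context), so we may assume $G$ infinite. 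Plugging in and recalling $|G:\Lambda_0| = |G:\Lambda_1| = 1$, the right-hand side of the Fatou bound becomes exactly $1 + 1 + \sum_{k \geq 2} \frac{1}{|G:\Lambda_k|} = 2 + \sum_{k \geq 2}\frac{1}{|G:\Lambda_k|}$, which is the claimed lower bound.

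The main (and essentially only) subtlety is justifying the interchange of $\liminf$ and the infinite sum — but this is precisely the easy direction of Fatou, requiring only nonnegativity of the terms, which holds since each $\mathbb{P}(X_n \in \Lambda_k \setminus \{e\}) \geq 0$. No uniform control on the tail of the sum in $k$ is needed for the lower bound (that uniform control is exactly what is hard, and what the later theorems supply to get the matching upper bound and hence genuine convergence). One should take a moment to confirm that $\Lambda_k$ has finite index for each $k$: a finitely generated group has only finitely many subgroups of each finite index, so only finitely many normal subgroups of index at most $k$, and a finite intersection of finite-index subgroups has finite index. With that in hand, the proposition follows immediately.
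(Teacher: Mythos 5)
Your proof is correct and follows essentially the same route as the paper: rewrite $\mathbb E[D(X_n)]$ via the tail-sum identity~\eqref{eq:Esum}, apply Corollary~\ref{cor:uniform} termwise, and conclude by Fatou's lemma for the counting measure. The extra details you supply (finite index of $\Lambda_k$, nonnegativity of the terms, the infinite-$G$ caveat) are all correct and merely make explicit what the paper leaves implicit.
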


\begin{proof}
	Recall the expression~\eqref{eq:Esum} for $E[D(X_n)]$:
	\begin{align*}
			\mathbb E[D(X_n)] = \sum_{k \geq 0} \mathbb P (X_n \in \Lambda_k\setminus\{e\}).
	\end{align*}
	Also recall from Corollary~\ref{cor:uniform} that 
	\begin{align*}
		\mathbb P (X_n \in \Lambda_k\setminus\{e\}) \xrightarrow[n \to \infty]{} 
		\begin{cases}
		\frac{1}{[G:\Lambda_k]} \quad &\text{ if $k \geq 2$,}\\
		1 \quad &\text{ if $k =0,1$.}
		\end{cases}
	\end{align*}
	The result follows from Fatou's lemma. 
\end{proof}

\begin{cor}
	Suppose $G$ is such that  $\sum_{k\geq 2} \frac{1}{|G:\Lambda_k|}$ diverges. 
	Then $\lim_{n\rightarrow \infty}\mathbb{E}(D(X_n))=\infty$.
	\label{ind}
\end{cor}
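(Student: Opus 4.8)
The plan is to deduce the corollary directly from Proposition~\ref{prop:Fatou}. That proposition already gives
\[
\liminf_{n\to\infty}\mathbb{E}[D(X_n)] \;\geq\; 2 + \sum_{k\geq 2}\frac{1}{|G:\Lambda_k|},
\]
so when the series on the right diverges, the lower bound is $+\infty$; hence $\liminf_{n\to\infty}\mathbb{E}[D(X_n)] = \infty$, which forces $\lim_{n\to\infty}\mathbb{E}[D(X_n)] = \infty$ as well (a sequence whose $\liminf$ is $+\infty$ converges to $+\infty$). There is essentially nothing more to do: the statement is a one-line consequence.

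If one prefers a self-contained argument not invoking Fatou through Proposition~\ref{prop:Fatou}, I would argue as follows. Fix any $M \in \mathbb{N}$. Since $\sum_{k\geq 2}\frac{1}{|G:\Lambda_k|}$ diverges, choose $K$ large enough that $\sum_{k=2}^{K}\frac{1}{|G:\Lambda_k|} > M$. From the expression $\mathbb{E}[D(X_n)] = \sum_{k\geq 0}\mathbb{P}(X_n \in \Lambda_k\setminus\{e\})$ in~\eqref{eq:Esum}, truncating the (nonnegative) sum gives
\[
\mathbb{E}[D(X_n)] \;\geq\; \sum_{k=2}^{K}\mathbb{P}(X_n \in \Lambda_k\setminus\{e\}).
\]
By Corollary~\ref{cor:uniform}, each of the finitely many terms $\mathbb{P}(X_n \in \Lambda_k\setminus\{e\})$ converges to $\frac{1}{|G:\Lambda_k|}$ as $n\to\infty$, so the finite sum on the right converges to $\sum_{k=2}^{K}\frac{1}{|G:\Lambda_k|} > M$. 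Therefore $\liminf_{n\to\infty}\mathbb{E}[D(X_n)] \geq M$, and since $M$ was arbitrary, $\lim_{n\to\infty}\mathbb{E}[D(X_n)] = \infty$.

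There is no real obstacle here; the only thing to be slightly careful about is that we are lower-bounding an infinite sum by a finite partial sum (legitimate by nonnegativity of the summands) and that interchanging the limit with a \emph{finite} sum is automatic, so no domination or monotone/dominated convergence hypothesis is needed beyond what Corollary~\ref{cor:uniform} already supplies.
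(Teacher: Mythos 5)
Your proof is correct and takes the same route as the paper, which simply cites Proposition~\ref{prop:Fatou} and notes that a divergent lower bound forces the limit to be infinite. Your additional self-contained truncation argument is also valid (it is essentially Fatou's lemma unwound for this nonnegative series), but it is not needed.
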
 

\begin{proof}
	This is a direct consequence of Proposition~\ref{prop:Fatou}.
\end{proof}

\begin{prop}\label{prop:domination}
	Suppose there exists a sequence of positive numbers
	$\{p_k\}_{k\geq 2}$ with
	\begin{itemize}
	\item $\sum_{k\geq 2} p_k<\infty$;
	\item $\mathbb{P}(X_n \in \Lambda_k\setminus \{e\})\leq p_k$
	for all $n\geq 1$ and $k \geq2$.
	\end{itemize}
	Then
	$$\lim_{n\rightarrow \infty}\mathbb{E}[D(X_n)]=2 + \sum_{k\geq 2} \frac{1}{|G:\Lambda_k|} < \infty.$$
\end{prop}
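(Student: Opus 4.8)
The plan is to prove Proposition~\ref{prop:domination} by combining the lower bound already established in Proposition~\ref{prop:Fatou} with a matching upper bound, the latter coming from the dominated convergence theorem applied to the series representation~\eqref{eq:Esum}. Recall that
\[
\mathbb E[D(X_n)] = \sum_{k \geq 0} \mathbb P(X_n \in \Lambda_k \setminus \{e\}),
\]
so it suffices to pass to the limit term by term inside this sum. For $k = 0, 1$ we have $\mathbb P(X_n \in \Lambda_k \setminus \{e\}) = \mathbb P(X_n \neq e) \to 1$, accounting for the constant $2$. For $k \geq 2$, Corollary~\ref{cor:uniform} gives $\mathbb P(X_n \in \Lambda_k \setminus \{e\}) \to \frac{1}{[G:\Lambda_k]}$. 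The hypothesis furnishes a summable sequence $(p_k)_{k \geq 2}$ dominating the $k$-th term uniformly in $n$, so defining a dominating function on $\mathbb N$ (counting measure) by $\phi(0) = \phi(1) = 1$ and $\phi(k) = p_k$ for $k \geq 2$, which is summable, the dominated convergence theorem yields
\[
\lim_{n \to \infty} \mathbb E[D(X_n)] = \sum_{k \geq 0} \lim_{n \to \infty} \mathbb P(X_n \in \Lambda_k \setminus \{e\}) = 2 + \sum_{k \geq 2} \frac{1}{[G:\Lambda_k]}.
\]
Finiteness of the right-hand side is immediate since it is bounded above by $2 + \sum_{k \geq 2} p_k < \infty$.

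Alternatively, and perhaps more transparently without invoking measure-theoretic machinery, I would argue directly: split the sum at a large index $K$, writing $\mathbb E[D(X_n)] = \sum_{k=0}^{K} \mathbb P(X_n \in \Lambda_k \setminus \{e\}) + \sum_{k > K} \mathbb P(X_n \in \Lambda_k \setminus \{e\})$. The tail is bounded by $\sum_{k > K} p_k$, which can be made smaller than any prescribed $\varepsilon$ by choosing $K$ large, uniformly in $n$. The head is a finite sum, so $\limsup_n$ and $\liminf_n$ of it both equal $\sum_{k=0}^{K} \lim_n \mathbb P(X_n \in \Lambda_k \setminus \{e\}) = 2 + \sum_{k=2}^{K} \frac{1}{[G:\Lambda_k]}$ by Corollary~\ref{cor:uniform}. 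Combining, $\limsup_n \mathbb E[D(X_n)] \leq 2 + \sum_{k=2}^{K} \frac{1}{[G:\Lambda_k]} + \varepsilon$; letting $K \to \infty$ and $\varepsilon \to 0$ gives $\limsup_n \mathbb E[D(X_n)] \leq 2 + \sum_{k \geq 2} \frac{1}{[G:\Lambda_k]}$. Together with Proposition~\ref{prop:Fatou}, which supplies the reverse inequality for the $\liminf$, the limit exists and equals the presumed limit.

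I do not expect a genuine obstacle here — this is a routine exchange-of-limits argument, and both the dominated convergence formulation and the $\varepsilon$/tail-splitting formulation are clean. The only point requiring a moment's care is bookkeeping the $k = 0, 1$ terms: one must remember that $\Lambda_0 = \Lambda_1 = G$ so that $\mathbb P(X_n \in \Lambda_k \setminus \{e\}) = \mathbb P(X_n \neq e) \to 1$ (the convergence to $1$ using that $G$ is infinite, via $\mathbb P(X_n = e) \to 0$), and that these two terms are precisely what produces the additive constant $2$ in the presumed limit. The summability of $(p_k)$ is used twice: once to make the tail uniformly small, and once to conclude the asserted value is finite.
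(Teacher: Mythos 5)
Your first argument is exactly the paper's proof: extend the dominating sequence by $p_0=p_1=1$ and apply the dominated convergence theorem to the series $\mathbb E[D(X_n)]=\sum_{k\geq 0}\mathbb P(X_n\in\Lambda_k\setminus\{e\})$, using Corollary~\ref{cor:uniform} for the termwise limits. The alternative tail-splitting argument is just an unwinding of the same idea, so there is nothing further to compare.
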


\begin{proof}
	Fix a sequence $(p_k)_{k\geq 2}$ as above, and set $p_0 = p_1 = 1$. 
	Then the convergence of $\mathbb P (X_n\in~\Lambda_k\setminus\{e\})$ to $\frac{1}{[G:\Lambda_k]}$ is dominated by $p_k$. 
	Since $p_k$ is summable, the dominated convergence theorem implies the desired result. 
\end{proof}

Below, when applying Proposition~\ref{prop:domination}, we will do so using the sequence 
$$ p_k = \sup_{n \geq 0}\mathbb{P}(X_n \in \Lambda_k\setminus \{e\}), \qquad \text{for $k \geq 2$}.$$
This sequence obviously satisfies the domination criterion; one needs to show it is summable in order to apply the proposition. 

\subsection{Sufficient condition using spectral properties}

%
%
%

\begin{proof}[Proof of Theorem~\ref{kazhdan}]
	Fix a linear group $G$ with Property $(T)$. 
	We will apply Proposition~\ref{prop:domination} to show the desired convergence. 
	Fix some $k \geq 2$ and let us bound $\mathbb{P}(X_n \in \Lambda_k \setminus \{e\})$ for arbitrary $n$. 
	
	First notice that, $X_n\in B_G^S(n)$ and therefore $D_G(X_n) \leq F_G^S(n)$. It follows that 
	\begin{align*}
		\mathbb{P}(X_n \in \Lambda_k \setminus \{e\}) = 0\qquad \text{ if $F_G^S(n) \leq k$}.
	\end{align*}
	Suppose now that $n$ is such that $F_G^S(n) > k$.  
	Recall from Corollary~\ref{cor:uniform} that  
	$|\mathbb{P}(X_n\in \Lambda_k)-\frac{1}{|G:\Lambda_k|}|\leq \mu_k^n$,
	where $\mu_k$ is the second largest eigenvalue of the transition matrix 
	of the induced lazy random walk on $\Cay(G/\Lambda_k, S)$. 
	Now, since $G$ has Property $(T)$, there exists a constant $0<\theta<1$ such that,
	for any normal finite index subgroup $N\lhd G$, 
	the second largest eigenvalue of the Cayley graph of $G/N$ is bounded above by $\theta<1$
	(see \cite{BekHarVal08}; the exact value of $\theta$ does depend on the generating set $S$ of $G$).
	In particular, 
	\begin{align*}
		\mathbb{P}(X_n \in \Lambda_k \setminus \{e\}) 
		\leq \mathbb{P}(X_n \in \Lambda_k ) 
		\leq \frac{1}{|G:\Lambda_k|}+ \mu_k^n 
		\leq \frac{1}{|G:\Lambda_k|}+ \theta^n.
	\end{align*}
	Observe that the right-hand side above is decreasing in $n$, and therefore is maximal when $n$ is minimal. 
	Set $N_k = \inf\{ n \geq 1:\, F_G^S(n) > k\}$. Then, by the above two cases, we deduce that
	\begin{align*}
		\mathbb{P}(X_n \in \Lambda_k \setminus \{e\}) 
		\leq \frac{1}{|G:\Lambda_k|}+ \theta^{N_k} =: p_k \qquad \text{ for all $n \geq 1$ and $k \geq 2$}.
	\end{align*}
	The values $(p_k)_{k\geq2}$ defined above satisfy the second property of Proposition~\ref{prop:domination}; 
	we will show now that they also satisfy the first.

	By \cite{BM13}, there exist $b \in \mathbb N$ and $C > 0$ such that $F_G^S(n)\leq Cn^{b}$ for all $n \geq 1$. 
	In particular, for any $k \geq 2$, 
	$N_k \geq C' k^{1/b}$ for some constant $C'>0$ that does not depend on $k$. 
	Moreover, $\sum_{k\geq 2} \frac{1}{|G:\Lambda_k|}$ is finite by Lemma~\ref{linear-lemma} and Theorem~\ref{thm:linear-finite average}.
	Thus
	\begin{align*}
		\sum_{k\geq2}p_k \leq \sum_{k\geq 2} \frac{1}{|G:\Lambda_k|} + \sum_{k\geq2}  \theta^{C' k^{1/b}} <\infty.
	\end{align*}
	Applying Proposition~\ref{prop:domination} yields the desired result. 
\end{proof}	

\subsection{Sufficient condition: abelian groups}
  
\begin{lem}\label{lem:RWonZ}
	Let $(X_n)_{n \geq 0}$ be a lazy random walk on $\ZZ$ (that is on a Cayley graph of $\ZZ$, as in \S\ref{sec:RW}). 
	Then there exists a constant $C > 0$ such that, 
	for all $m \geq 1$
	\begin{align}\label{eq:RWonZ1}
		\sup_{n \geq 0} \PP\big(X_n \in m\ZZ\setminus \{0\}\big) \leq \frac{C}{\sqrt m}.
	\end{align}
	In particular, there exists $c > 0$ such that, for $k \geq 2$, 
	\begin{align}\label{eq:RWonZ2}
		\sup_{n \geq0} \PP\big(X_n \in \Lambda_k(\ZZ)\setminus \{0\}\big) \leq e^{-ck}.
	\end{align}
\end{lem}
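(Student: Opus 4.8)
The plan is to establish the pointwise local-limit-type bound $\sup_{n\geq 0}\PP(X_n = j)\leq C/\sqrt{n}$ for all $j$ (with $C$ independent of $j$ and $n$), and even better, a bound that decays in $|j|$ once $|j|$ exceeds $\sqrt n$. The standard tool is the Fourier/characteristic-function method: writing the step distribution's characteristic function as $\phi(t)=\tfrac12+\tfrac12\psi(t)$ where $\psi$ is the characteristic function of the (symmetric, finitely supported) non-lazy step, one has $\PP(X_n=j)=\frac1{2\pi}\int_{-\pi}^{\pi}\phi(t)^n e^{-ijt}\,dt$. Since the walk is lazy, $|\phi(t)|<1$ for all $t\neq 0$ and $\phi(t)=1-\Theta(t^2)$ near $0$, so $|\phi(t)|^n\leq e^{-cnt^2}$ on $[-\pi,\pi]$ for a suitable $c>0$; integrating gives $\PP(X_n=j)\leq \frac1{2\pi}\int e^{-cnt^2}dt = O(1/\sqrt n)$, uniformly in $j$. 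This already yields $\sup_n\PP(X_n=j)\leq C$ trivially, but to get a useful estimate I need to sum over the arithmetic progression, so the $1/\sqrt n$ decay is essential.

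Next I would turn the single-point bound into the progression bound \eqref{eq:RWonZ1}. Fix $m\geq 1$. Split the supremum over $n$ into two ranges. For $n\geq m^2$, use $\PP(X_n\in m\ZZ\setminus\{0\})\leq \sum_{j\in m\ZZ,\,|j|\leq Kn}\PP(X_n=j)$ — the walk has bounded step size, say bounded by $R$, so after $n$ steps it lies in $[-Rn,Rn]$, giving at most $O(n/m)$ nonzero multiples of $m$ in range — each term is $O(1/\sqrt n)$, so the sum is $O(\sqrt n/m)\cdot$... that is too weak. Instead the cleaner route is: for $n\geq m^2$, note $\PP(X_n\in m\ZZ) = \frac1m\sum_{r=0}^{m-1}\EE[e^{2\pi i r X_n/m}] = \frac1m\sum_{r=0}^{m-1}\phi(2\pi r/m)^n$; the $r=0$ term contributes $1/m$, and each remaining term satisfies $|\phi(2\pi r/m)^n|\leq e^{-c n (r/m)^2 \wedge c'n}\leq e^{-c n/m^2}$ for the smallest ones... summing over $r$ and using $n\geq m^2$ shows $\PP(X_n\in m\ZZ)\leq \frac1m + O(1/m)$, hence subtracting $\PP(X_n=0)=O(1/m)$ leaves $O(1/m)\leq C/\sqrt m$. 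For $n< m^2$ (the regime where the walk has not yet had time to equidistribute mod $m$), I bound more crudely: $\PP(X_n\in m\ZZ\setminus\{0\})\leq \PP(|X_n|\geq m) + \PP(X_n\in m\ZZ,\, 0<|X_n|<m)$; the second term is zero and the first is controlled by $\PP(|X_n|\geq m)\leq \EE[X_n^2]/m^2 = O(n/m^2)\leq O(1)$, which is not enough — so instead use $\PP(|X_n|\geq m)\leq e^{-c m^2/n}$ by a standard Hoeffding/Azuma bound for the bounded-step walk, and since $n<m^2$ this is $O(1)$, still too weak; the fix is to observe that in this regime $\PP(X_n\in m\ZZ\setminus\{0\})\leq \max_{j\geq m}\PP(X_n=j)\cdot(\text{number of multiples})$, or more simply combine both ideas: for ALL $n$, $\PP(X_n\in m\ZZ\setminus\{0\})\leq \sum_{\ell\geq 1}\PP(X_n=\ell m)+\PP(X_n=-\ell m)\leq \sum_{\ell\geq 1}2\,C\min(1/\sqrt n,\, e^{-c\ell^2 m^2/n})$, and a direct computation of this sum — comparing $\ell m$ to $\sqrt n$ — gives $O(1/\sqrt m)$. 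I expect the $n<m^2$ regime, where naive second-moment bounds fail, to be the main technical obstacle; the resolution is to use the sharper local bound $\PP(X_n=j)\leq C e^{-cj^2/n}/\sqrt{n}$ (Gaussian tails of the local probabilities, obtainable by shifting the Fourier contour or by a large-deviations estimate) rather than just $C/\sqrt n$.

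Finally, \eqref{eq:RWonZ2} follows from \eqref{eq:RWonZ1} by a group-theoretic computation of $\Lambda_k(\ZZ)$. The finite-index subgroups of $\ZZ$ are exactly $m\ZZ$ for $m\geq 1$, each automatically normal, with $[\ZZ:m\ZZ]=m$. Hence $\Lambda_k(\ZZ)=\bigcap_{m\leq k} m\ZZ = \lcm(1,2,\dots,k)\,\ZZ$. By the Prime Number Theorem, $\log\lcm(1,\dots,k)=\psi(k)\sim k$, so $\lcm(1,\dots,k)\geq e^{k/2}$ for $k$ large (and a suitable constant handles small $k$). Plugging $m=\lcm(1,\dots,k)$ into \eqref{eq:RWonZ1} gives $\sup_n\PP(X_n\in\Lambda_k(\ZZ)\setminus\{0\})\leq C/\sqrt{\lcm(1,\dots,k)}\leq C e^{-k/4}\leq e^{-ck}$ for a suitable $c>0$ and all $k\geq 2$, as claimed.
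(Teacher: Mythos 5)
Your proof is essentially correct for the statement, but it follows a genuinely different route from the paper for \eqref{eq:RWonZ1}. You work on the Fourier side: the identity $\PP(X_n\in m\ZZ)=\frac1m\sum_{r=0}^{m-1}\phi(2\pi r/m)^n$ for $n\ge m^2$, and the off-diagonal Gaussian upper bound $\PP(X_n=j)\le C n^{-1/2}e^{-cj^2/n}$ for $n<m^2$ (in fact that single Gaussian bound, summed over $j\in m\ZZ\setminus\{0\}$, handles \emph{all} $n$ at once and yields $O(1/m)$, which is stronger than the claimed $O(1/\sqrt m)$). The paper instead uses only the on-diagonal estimate $\sup_j\PP(X_{n_0}=j)\le C/\sqrt{n_0}$ from \cite[Thm.~VI.5.1]{Varopoulos_book} together with an elementary observation: choosing $c_0$ with $|X_1|<\tfrac1{2c_0}$ a.s., the walk moves less than $m/2$ in its last $c_0m$ steps, so conditionally on $X_{n-c_0m}=\ell$ there is at most \emph{one} reachable point of $m\ZZ$, and the one-point bound $C/\sqrt{c_0m}$ applies; this avoids both Fourier analysis and any off-diagonal heat-kernel estimate. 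Your approach buys a sharper constant and generalises to settings where the conditioning trick is awkward, at the cost of invoking a heavier (though standard) Gaussian upper bound whose proof you only sketch. One caution: your parenthetical claim that the cruder bound $\sum_{\ell\ge1}\min\bigl(n^{-1/2},e^{-c\ell^2m^2/n}\bigr)$ already gives $O(1/\sqrt m)$ is false for $n$ much larger than $e^{m}$ (the crossover count contributes roughly $\sqrt{\log n}/m$), so you do need either the Fourier identity in the large-$n$ regime or the genuine product-form bound $Cn^{-1/2}e^{-cj^2/n}$, not the minimum of the two factors. The deduction of \eqref{eq:RWonZ2} via $\Lambda_k(\ZZ)=\mathrm{lcm}(1,\dots,k)\ZZ$ and the Prime Number Theorem is exactly the paper's argument.
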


\begin{proof}
	We start with the proof of~\eqref{eq:RWonZ1}. Let $c_0 >0$ be such that $|X_1| < \frac1{2c_0}$ almost surely. 
	Below, write $c_0 m $ for the integer part of $c_0m$ so as not to over-burden notation. 
	Then, for $n \leq c_0 m$, $\PP(X_n \in m\ZZ\setminus \{0\}) = 0$. 
	For $n \geq c_0 m$, write
	\begin{align}\label{eq:RWonZ3}
		\PP(X_n \in m\ZZ\setminus \{0\}) 
		= \sum_{\ell \in \ZZ} \PP\big(X_n \in m\ZZ\setminus \{0\}\big| X_{n - c_0m} = \ell\big) 
		\PP(X_{n - c_0m} = \ell).
	\end{align}
	Now notice that, due to the choice of $c_0$, $|X_n - X_{n - c_0m}| <m/2$ almost surely.
	However, for any fixed $\ell \in \ZZ$, 
	there exists at most one element $m(\ell) \in m\ZZ$ with $|\ell - m(\ell)| < m/2$.
	If no such element exists, choose $m(\ell) \in m\ZZ$ arbitrarily. 
	Thus
	\begin{align*}
		\PP\big(X_n \in m\ZZ\setminus \{0\}\big| X_{n - c_0m} = \ell\big) 
		& \leq\PP\big(X_n = m(\ell)\big| X_{n - c_0m} = \ell\big) \\
		& =\PP\big(X_{c_0m} = m(\ell) - \ell\big) 
		\leq \frac{C}{\sqrt{c_0 m}},
	\end{align*}
	where the last inequality is due to \cite[Thm. VI.5.1]{Varopoulos_book} and $C > 0$ 
	is some fixed constant depending only on the transition probability of the random walk. 
	When injecting the above in~\eqref{eq:RWonZ3}, we find
	\begin{align*}
		\PP(X_n \in m\ZZ\setminus \{0\}) 
			\leq \sum_{\ell \in \ZZ}  \frac{C}{\sqrt{c_0 m}}\PP(X_{n - c_0m} = \ell)
			= \frac{C}{\sqrt{c_0 m}}.
	\end{align*}
	Since the right-hand side does not depend on $n$, this implies~\eqref{eq:RWonZ1} with an adjusted value of~$C$. 
	\smallskip 
	
	We move on to proving~\eqref{eq:RWonZ2}. The (normal) subgroups of $\ZZ$ are of the form $k\ZZ$, with $k$ being their index. 
	Thus, for $k\geq 2$, 
	$$ \Lambda_k(\ZZ) = m_k \ZZ,$$
	where $m_k$ is the least common multiple of $1,\dots, k$ (see \cite{BBKM13}). 
	It follows from the Prime Number Theorem that there exists a constant $c > 0$ such that 
	$$ m_k \geq \exp(ck),\qquad \forall k\geq 2.$$
	The above bound, together with~\eqref{eq:RWonZ1}, implies~\eqref{eq:RWonZ2} with an adjusted value of $c$. 
\end{proof}

\begin{cor}[Expected depth for $\Z$.]
	Let  $(X_n)_{n\geq 0}$ be a lazy random walk on $\Z$ (that is, on a Cayley graph of $\Z$, as in \S\ref{sec:RW}). Then 
	\begin{equation}
		\E[D_{\Z}(X_n)]\xrightarrow[n\to \infty]{} 2+\sum_{k\geq 2}\frac{1}{|\Z:\Lambda_k|}<\infty.
	\end{equation}
	\label{Z}
\end{cor}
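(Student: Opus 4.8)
The plan is to combine Lemma~\ref{lem:RWonZ} with Proposition~\ref{prop:domination}. The main content of the corollary is already isolated in~\eqref{eq:RWonZ2} of the preceding lemma, so what remains is essentially bookkeeping. First I would set, for $k \geq 2$,
$$ p_k := \sup_{n \geq 0} \PP\big(X_n \in \Lambda_k(\Z)\setminus\{0\}\big), $$
and observe that this is the very sequence flagged after Proposition~\ref{prop:domination}: it trivially satisfies the domination hypothesis $\PP(X_n \in \Lambda_k\setminus\{0\}) \leq p_k$ for all $n$. By~\eqref{eq:RWonZ2} we have $p_k \leq e^{-ck}$ for some $c > 0$, hence $\sum_{k \geq 2} p_k \leq \sum_{k\geq 2} e^{-ck} < \infty$, which verifies the summability hypothesis of Proposition~\ref{prop:domination}.

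Second, I would simply invoke Proposition~\ref{prop:domination} with this sequence $(p_k)_{k\geq 2}$: it yields directly
$$ \E[D_\Z(X_n)] \xrightarrow[n\to\infty]{} 2 + \sum_{k\geq 2}\frac{1}{|\Z:\Lambda_k|}. $$
Finiteness of the right-hand side follows because $\Z$ is abelian, hence nilpotent, hence linear, so Lemma~\ref{linear-lemma} together with Theorem~\ref{thm:linear-finite average} gives $\sum_{k\geq 2}\frac{1}{|\Z:\Lambda_k|} = \Ave(\Z) - 2 < \infty$; alternatively one notes directly from $|\Z:\Lambda_k| = m_k \geq e^{ck}$ that the series converges by comparison with a geometric series. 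I would include the latter self-contained remark since it costs nothing and keeps the corollary independent of the linear-group machinery.

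There is no real obstacle here — the corollary is a direct packaging of Lemma~\ref{lem:RWonZ} into the framework of Proposition~\ref{prop:domination}. If anything, the only point deserving a word of care is that Proposition~\ref{prop:domination} was stated for a general finitely generated residually finite $G$ with the walk as in \S\ref{sec:RW}, and $\Z$ with a finite symmetric generating set falls squarely within that setting, so the application is immediate. One could also phrase the whole proof in a single sentence, but I would spell out the choice of $(p_k)$ explicitly for the reader's convenience.

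\begin{proof}
	For $k \geq 2$ set $p_k := \sup_{n\geq 0}\PP\big(X_n \in \Lambda_k(\Z)\setminus\{0\}\big)$.
	By construction $\PP\big(X_n \in \Lambda_k(\Z)\setminus\{0\}\big) \leq p_k$ for every $n \geq 0$,
	and by~\eqref{eq:RWonZ2} of Lemma~\ref{lem:RWonZ} there is $c > 0$ with $p_k \leq e^{-ck}$ for all $k \geq 2$;
	hence $\sum_{k\geq 2} p_k < \infty$.
	The sequence $(p_k)_{k\geq 2}$ therefore satisfies both hypotheses of Proposition~\ref{prop:domination},
	whose conclusion is exactly the claimed convergence.
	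Finiteness of the limit also follows from $\sum_{k\geq 2} p_k < \infty$,
	or directly from $|\Z:\Lambda_k| = \lcm(1,\dots,k) \geq e^{ck}$.
\end{proof}
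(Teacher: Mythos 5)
Your proof is correct and follows exactly the same route as the paper: define $p_k = \sup_{n\geq 0}\PP(X_n \in \Lambda_k(\Z)\setminus\{0\})$, bound it via~\eqref{eq:RWonZ2} of Lemma~\ref{lem:RWonZ} to get summability, and apply Proposition~\ref{prop:domination}. The extra remark on finiteness via $|\Z:\Lambda_k| = \lcm(1,\dots,k)\geq e^{ck}$ is a harmless, correct addition.
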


\begin{proof}
	For $k \geq 2$, set $p_k = \sup_{n \geq0} \PP\big(X_n \in \Lambda_k(\ZZ)\setminus \{0\}\big)$.
	By Lemma~\ref{lem:RWonZ}, $\sum_{k \geq 2} p_k <\infty$. 
	
	Moreover, the sequence $(p_k)_{k \geq2}$ dominates the convergence of 
	$\PP\big(X_n \in \Lambda_k(\ZZ)\setminus \{0\}\big)$, 
	as required in Proposition~\ref{prop:domination}. The conclusion follows. 
\end{proof}

\begin{prop}\label{prop:direct_product}
	Let $G$ and $H$ be two finitely generated residually finite groups. 
	Let  $(X_n)_{n\geq 0}$ be a random walk on a Cayley graph of $G\times  H$, as in \S\ref{sec:RW}. 
	Then 
	\begin{align*}
		&\PP[D_{G\times H} (X_n)>k]\leq \PP[D_G(Y_n)>k] + \PP[D_H(Z_n)>k] \qquad \text{for all $k \geq 0$,} 
	\end{align*}
	where $(Y_n)_{n\geq0}$ and $(Z_n)_{n\geq0}$ are the random walks on $G$ and $H$, respectively, induced by $(X_{n})_{n \geq 0}$.
\end{prop}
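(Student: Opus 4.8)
The plan is to relate the depth function on the product $G\times H$ to the depth functions on the factors, and then use a union bound. The key structural observation is the following: if $(g,h)\in G\times H$ with $(g,h)\neq (e_G,e_H)$, then at least one of $g$ or $h$ is nontrivial, say $g\neq e_G$; and any normal finite-index subgroup $N\lhd G$ with $g\notin N$ gives rise to the normal finite-index subgroup $N\times H \lhd G\times H$ which does not contain $(g,h)$, and satisfies $[G\times H : N\times H] = [G:N]$. Hence $D_{G\times H}(g,h)\leq D_G(g)$ whenever $g\neq e_G$, and symmetrically $D_{G\times H}(g,h)\leq D_H(h)$ whenever $h\neq e_H$.

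From this I would deduce the event inclusion. Writing $X_n = (Y_n, Z_n)$ — which is exactly the statement that $(Y_n)$ and $(Z_n)$ are the induced random walks on $G$ and $H$ (the projection of a lazy random walk on a Cayley graph of $G\times H$ to the factors is again a lazy-type random walk of the form considered in \S\ref{sec:RW}, taking a step in one coordinate or staying put) — I claim that for $k\geq 0$,
\begin{align*}
	\{D_{G\times H}(X_n) > k\} \subseteq \{D_G(Y_n) > k\} \cup \{D_H(Z_n) > k\}.
\end{align*}
Indeed, suppose $D_{G\times H}(Y_n,Z_n) > k$. If $Y_n \neq e_G$, then by the inequality above $D_G(Y_n) \geq D_{G\times H}(Y_n,Z_n) > k$, so we are in the first event. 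If $Y_n = e_G$, then since $D_{G\times H}(Y_n,Z_n)>k\geq 0$ we must have $(Y_n,Z_n)\neq(e_G,e_H)$, hence $Z_n\neq e_H$, and then $D_H(Z_n)\geq D_{G\times H}(Y_n,Z_n) > k$, placing us in the second event. (The case $Y_n=e_G$ with $k=0$ is covered since $D_{G\times H}(e_G,e_H)=0$, so $D_{G\times H}(X_n)>0$ already forces $X_n\neq e$.) Taking probabilities and applying the union bound gives the claimed inequality.

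I do not expect a serious obstacle here; the main point to be careful about is the bookkeeping around the neutral element, since the convention $D(e)=0$ means the naive inequality $D_{G\times H}(g,h)\leq D_G(g)$ fails when $g=e_G$, and one must split into cases exactly as above. A secondary point worth stating cleanly is that the projection of $(X_n)$ to each factor is genuinely a random walk of the type covered by the earlier results (so that, e.g., Corollary~\ref{cor:uniform} applies to $Y_n$ and $Z_n$ if needed downstream), but for the inequality itself only the almost-sure identity $X_n=(Y_n,Z_n)$ is used.
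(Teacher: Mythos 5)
Your proposal is correct and follows essentially the same route as the paper: the key inequalities $D_{G\times H}(g,h)\leq D_G(g)$ for $g\neq e_G$ and $D_{G\times H}(g,h)\leq D_H(h)$ for $h\neq e_H$, followed by the event inclusion and a union bound. You supply more detail than the paper (the explicit subgroup $N\times H$ and the case analysis around the neutral element), but the underlying argument is identical.
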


\begin{proof}
Notice that for $g=(g_1, g_2) \in G\times H$  we have estimates
\begin{align*}
&D_{G\times H}(g)\leq D_G(g_1) \text{ if } g_1\neq e\\
&D_{G\times H}(g)\leq D_G(g_2) \text{ if } g_2\neq e
\end{align*}

Therefore $\PP[D_{G\times H} (X_n)>k]\leq \PP[D_G(Y_n)>k] + \PP[D_H(Z_n)>k]$.
\end{proof}

\begin{cor} 
	Let $G$ be a finitely generated abelian group and let  $(X_n)_{n\geq 0}$ 
	be a lazy random walk on its Cayley graph, as in \S\ref{sec:RW}. Then
	\begin{equation*}
	\E[D_G(X_n)]\xrightarrow[n\rightarrow \infty]{} 2+\sum_{k\geq 2} \frac{1}{|G:\Lambda_k|}<\infty,
	\end{equation*}
	for any finite generating set $S$ of $G$.
\end{cor}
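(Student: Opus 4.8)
The plan is to reduce the general finitely generated abelian group to the case $G = \ZZ^d$ and then to $\ZZ$ via Proposition~\ref{prop:direct_product}, applying Corollary~\ref{Z} as the base case. A finitely generated abelian group splits as $G = \ZZ^d \times T$ with $T$ finite. For the finite factor $T$, the depth $D_T$ is a bounded function (it takes values among the finitely many indices $[T:N]$ of normal subgroups, all at most $|T|$), so $\PP[D_T(Z_n) > k] = 0$ for all $k \geq |T|$; in particular the sum $\sum_{k \geq 0}\PP[D_T(Z_n) > k]$ is uniformly bounded, and each term converges to $\PP(Z_n \in \Lambda_k(T)\setminus\{e\})$'s limit, which by Corollary~\ref{cor:uniform} (applied with $N = \Lambda_k(T)$, noting $T$ is finite so one must instead argue directly, or simply observe $\Lambda_k(T)$ eventually equals $\{e\}$) gives $\tfrac{1}{[T:\Lambda_k(T)]}$. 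For $\ZZ^d$, iterate Proposition~\ref{prop:direct_product}: writing $\ZZ^d = \ZZ \times \ZZ^{d-1}$ and inducting on $d$, one obtains
\begin{align*}
	\PP[D_{\ZZ^d}(X_n) > k] \leq \sum_{i=1}^{d} \PP[D_{\ZZ}(X_n^{(i)}) > k],
\end{align*}
where $X_n^{(i)}$ is the induced walk on the $i$-th coordinate.

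The key step is then to produce a summable dominating sequence for $\PP[D_G(X_n) > k] = \PP(X_n \in \Lambda_k(G)\setminus\{e\})$, so that Proposition~\ref{prop:domination} applies. From the displayed inequality and the analogous one for $G = \ZZ^d \times T$, set
\begin{align*}
	p_k := d \cdot \sup_{n \geq 0}\PP\big(X_n^{(1)} \in \Lambda_k(\ZZ)\setminus\{0\}\big) + q_k,
\end{align*}
where $q_k$ accounts for the finite factor and vanishes for $k$ large. By Lemma~\ref{lem:RWonZ}, the first term is bounded by $d\, e^{-ck}$, hence summable; $q_k$ is summable since it is eventually zero. Thus $\sum_{k \geq 2} p_k < \infty$, and $p_k$ dominates $\PP(X_n \in \Lambda_k(G)\setminus\{e\})$ for all $n$. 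Proposition~\ref{prop:domination} then yields $\E[D_G(X_n)] \to 2 + \sum_{k\geq2}\tfrac{1}{[G:\Lambda_k]} < \infty$.

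One technical point needs care: the walks $X_n^{(i)}$ induced on the coordinate factors of $\ZZ^d$ are lazy random walks on $\ZZ$ in the sense of \S\ref{sec:RW}, i.e. they have a holding probability bounded away from $0$ and a symmetric, finitely supported step distribution — this is needed to invoke Lemma~\ref{lem:RWonZ}, whose proof uses the local limit bound \cite[Thm.~VI.5.1]{Varopoulos_book} and the choice of a constant $c_0$ with $|X_1| < \tfrac{1}{2c_0}$. Since a step of the walk on $\Cay(\ZZ^d, S)$ projects to a step supported on a finite subset of $\ZZ$ (the projections of $S$) with $e$ retained with probability $\geq 1/2$, the induced walk is of the admissible type, as already noted in the remark at the end of \S\ref{sec:RW}. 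The only genuine subtlety is that the projected walk may be \emph{constant} in some coordinate (if $S$ lies in a proper sub-lattice), but then $\Lambda_k(\ZZ) \cap \{\text{range}\} = \{0\}$ issues do not arise because $D_G$ is still controlled by the non-degenerate coordinates together with the finitely many "missing" directions handled via the corresponding finite quotients; alternatively one avoids this by choosing coordinates adapted to $S$. This bookkeeping is the main obstacle, but it is routine once the reduction is set up correctly.
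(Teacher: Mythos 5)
Your proposal is correct and follows essentially the same route as the paper: decompose $G = \ZZ^{j} \times H$ with $H$ finite, iterate Proposition~\ref{prop:direct_product}, control each $\ZZ$ factor via Lemma~\ref{lem:RWonZ} and the finite factor by the boundedness of its depth function, and conclude with the domination argument of Proposition~\ref{prop:domination}. Your worry about a degenerate coordinate is moot, since $S$ generates $G$ and hence projects onto a generating set of each $\ZZ$ factor; otherwise the extra care you take with the induced walks is consistent with the remark at the end of \S\ref{sec:RW}.
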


\begin{proof}
	Let $G$ be a finitely generated abelian group. 
	Then it may be written as $G = \ZZ \times \dots \times \ZZ \times H$, where the product contains $j$ copies of $\ZZ$ 
	and $H$ is a finite abelian group. 
	The depth of elements of $H$ is bounded by $|H|$. 
	By Proposition~\ref{prop:direct_product} and Lemma~\ref{lem:RWonZ}, there exists $c > 0$ such that 
	$$ \PP\big(X_n \in \Lambda_k(G)\setminus \{0\}\big) \leq j e^{-ck}, \qquad \forall k > |H|.$$
	We conclude using the same domination argument as in the proof of Corollary~\ref{Z}.
\end{proof}
\smallskip 

\subsection{Nilpotent groups: proof of Theorem~\ref{nilp}}
~\smallskip

\noindent \textbf{Torsion free case.} Suppose first that $G$ is a finitely generated and torsion free nilpotent group, different from $\ZZ$. Observe that $G$ is a poly-$C_{\infty}$ group, and in particular, $G = \ZZ \ltimes H$ where $H$ is a non-trivial finitely generated nilpotent group. 
	Let $S$ be a system of generators of $G$ and consider the lazy random walk on $G$ with steps taken uniformly in $S$. 
	We will write it in the product form $(X_n,Y_n)_{n \geq 0}$, where $X_n \in \ZZ$ and $Y_n \in H$, for all $n \geq 0$. 
	Notice then that $(X_n)_{n \geq 0}$ is a lazy random walk on $\ZZ$, as treated in Lemma~\ref{lem:RWonZ}. 
	This is not true on the second coordinate: $(Y_n)_{n\geq0}$ is not a random walk on $H$, it is not even a Markov process.
	
	For $ k \geq 2$, let 
	\begin{align}
		p_k(G) = \sup_{n \geq 0}  \PP \big[D_G(X_n,Y_n) > k \big], 
	\end{align}
	and recall from Proposition~\ref{prop:domination} (and the commentary below it) that our goal is to prove that $\sum_k p_k <\infty$.  
	
	One may easily check that, for any $m \in \NN$, $m\ZZ \ltimes H$ is a normal subgroup of $G$. 
	Thus, for all $x \in \ZZ\setminus \{0\}$ and $y \in H$, 
	$$ D_G(x,y) \leq D_{\ZZ}(x).$$
	We may therefore bound $p_k$ by 
	\begin{align*}
		p_k(G)
		&\leq \sup_{n \geq 0} \PP \big[D_G(X_n) > k \big] + \PP \big[X_n = 0 \text{ and } D_G(0,Y_n) > k \big]\\
		&\leq p_k(\ZZ) + \sup_{n \geq0}\PP \big[X_n = 0 \text{ and } D_G(0,Y_n) > k \big].
	\end{align*}
	In the above, $p_k(\ZZ)= \sup_{n \geq 0}  \PP \big[D_{\Z}(X_n) > k \big]$. 
	We have shown in Lemma~\ref{lem:RWonZ} that $\sum_k p_k(\Z) < \infty$, 
	and we may focus on whether the second supremum is summable. 
	
	Fix $k \geq 2$. 
	Since $G$ is nilpotent, there exists $c > 0$ such that 
	$$F_G^S(n) \leq c (\log n )^{h(G)} ,$$
	where $h(G)$ is the Hirsch length of $G$ (see Theorem~\ref{rf-nilp}).
	This should be understood as follows. In order for an element $g \in G$ to have $D_G(g) \geq k$, 
	it is necessary that $\| g\|_S \geq \exp(C k^{1 / h(G)})$, where $\|g\|_S$ denotes the length of $g$ with respect to the generating set $S$ and $C > 0$ is a constant independent of $g$.
	
	In particular, we conclude that 
	\begin{align*}
		\PP\big[X_n = 0 \text{ and } D_G(0,Y_n) > k \big] =0 \quad 
		& \text{	if $n < \exp(C k^{1 / h(G)})$;}\\
		\PP\big[X_n = 0 \text{ and } D_G(0,Y_n) > k \big]\leq \PP(X_n = 0)\quad 
		& \text{	if $n \geq \exp(C k^{1 / h(G)})$.}	
	\end{align*}
	In treating the second case, 
	observe that, since $(X_n)_{n\geq 0}$ is a random walk on $\ZZ$,
	$$\PP(X_n = 0) \leq c_0 n^{-1/2}, $$
	for some constant $c_0 > 0$ (see \cite[Thm. VI.5.1]{Varopoulos_book}).
	Thus
	\begin{align*}
		\PP\big[X_n = 0 \text{ and } D_G(0,Y_n) > k \big] 
		\leq c_0 \exp\Big(- \frac{C}2 k^{\frac{1}{ h(G)} }\Big),\qquad \forall n \in \NN.
	\end{align*}
	We conclude that 
	\begin{align*}
		\sum_{k \geq2} \sup_{n \geq0}\PP \big[X_n = 0 \text{ and } D_G(0,Y_n) > k \big] < \infty, 
	\end{align*}
	and therefore that $\sum_{k \geq 0} p_k(G) < \infty$. 
	\medskip
	
\noindent \textbf{General nilpotent case.} 
Let now $G$ be a finitely generated nilpotent group. Consider the set $T(G)$ of all torsion elements of $G$. Since $G$ is nilpotent, the set $T(G)$ is a finite normal subgroup in $G$. Consider an epimorphism $\pi\colon G\rightarrow G/T(G)$. Denote by $H$ the quotient $G/T(G)$ and notice that $H$ is a finitely generated torsion-free nilpotent group. 

Observe that for any non-trivial element in $H$ detected by a normal subgroup in $H$ of index $k$ there exists a normal subgroup in $G$ of index at most $k$ that detects its preimage. In other words, for all $g \in G \setminus T(G)$, 
	\begin{align*}
	 	D_G(g) \leq D_H(\pi(g)).
	\end{align*}

The random walk $(X_n)_{n \geq 0}$ on $G$ induces a random walk $(\pi(X_n))_{n \geq 0}$ on $H$. 
Let $d = \max \{D_G(g) , g\in T(G)\}$. 
Due to the observation above,
for all $k>d$,
$$\PP[D_G(X_n)\geq k] \leq \PP[D_H(\pi(X_n))\geq k].$$
We deduce from the case of torsion free nilpotent groups that $\sum_{k>d}\sup_{n\geq 0} \PP[D_G(X_n)\geq k]<\infty$. 
Then the second point of Proposition~\ref{prop:domination} applies and we obtain the desired conclusion. 
\hfill $\square$

\subsection{A counter example}\label{sec:counter_ex}


\begin{prop}[Groups with infinite expected depth]\label{prop:counter_ex}
	There exists a finitely generated residually finite group $G$ such that 
	$$\lim_{n\rightarrow \infty}\mathbb{E}(D(X_n))=\infty,$$
	but for which the ``presumed'' limit $\sum_{k\geq 2} \frac{1}{|G:\Lambda_k|}$ is finite.
\end{prop}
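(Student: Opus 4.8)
Here is the plan.

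\medskip

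The plan is to realise the counterexample as a direct product $G = H \times \ZZ$, where $H$ is a finitely generated, residually finite, \emph{perfect} group built as a subdirect product of a sequence $A_1,A_2,\dots$ of finite non-abelian simple groups whose orders grow like $\exp(2^{2^j})$ (so that only $O(\log\log\log k)$ of the $A_j$ have order at most $k$), following the type of explicit construction behind Theorem~\ref{rf-neumann} (see \cite{BS13b}). Then $G$ is three-generated and residually finite, matching the assertion. Two features are needed from the construction: (i) the intersection growth $[H:\Lambda_k(H)]$ is \emph{sub-polynomial} in $k$, since the only finite quotients of $H$ of index at most $k$ involve the few $A_j$ with $|A_j|\le k$, so $[H:\Lambda_k(H)]\approx|A_{J(k)}| = \exp((\log k)^{o(1)})$; (ii) $H$ is perfect, so $H\times\ZZ$ has no diagonal cyclic quotient and hence $\Lambda_k(G) = \Lambda_k(H)\times \mathrm{lcm}(1,\dots,k)\,\ZZ$, using $\Lambda_k(\ZZ) = \mathrm{lcm}(1,\dots,k)\,\ZZ$ from \cite{BBKM13}. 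In particular $[G:\Lambda_k(G)]\ge \mathrm{lcm}(1,\dots,k)$, so by the Prime Number Theorem $\sum_{k\ge2}[G:\Lambda_k(G)]^{-1}<\infty$: the presumed limit is finite.

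Next I would establish a lower bound on $\PP(X_n\in\Lambda_k(G))$ that decays in $k$ slower than $1/k$. Let $\pi:G\to\ZZ$ be the projection, let $\bar X_n$ be the induced walk on the finite group $Q_k := G/\Lambda_k(G)$ with (symmetric) step law $\mu$, and set $V_k := \ker\bigl(Q_k \xrightarrow{\pi}\ZZ/\mathrm{lcm}(1,\dots,k)\bigr)$, so that $|V_k| = [H:\Lambda_k(H)]$ is sub-polynomial in $k$. Since $\mu$ is symmetric, $\PP(\bar X_{2n}=e_{Q_k}) = \|\mu^{*n}\|_2^2$, and Cauchy--Schwarz over the subset $V_k\subseteq Q_k$ gives, for every $n\ge1$ and every $k\ge2$,
\begin{align*}
	\PP\bigl(X_{2n}\in\Lambda_k(G)\bigr)
	= \PP\bigl(\bar X_{2n}=e_{Q_k}\bigr)
	\;\ge\; \frac{\PP(\bar X_n\in V_k)^2}{|V_k|}
	\;\ge\; \frac{\PP(\pi(X_n)=0)^2}{|V_k|}
	\;\ge\; \frac{c^2}{n\,|V_k|},
\end{align*}
because $\pi(X_n)$ is a lazy symmetric nearest-neighbour walk on $\ZZ$, so $\PP(\pi(X_n)=0)\ge c\,n^{-1/2}$ for all $n\ge1$ by the local central limit theorem --- crucially there is no upper restriction on $n$ here. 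Separately, since $H$ has arbitrarily large non-solvable finite quotients it is not virtually nilpotent, hence has super-polynomial growth, so $\PP(X_{2n}=e_G)\le \PP\bigl(\mathrm{proj}_H(X_{2n})=e_H\bigr)=o(n^{-A})$ for every $A>0$ by Varopoulos-type heat-kernel estimates.

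To finish, using $\Lambda_K(G)\subseteq\Lambda_k(G)$ for $k\le K$ and combining the two facts above,
\begin{align*}
	\EE[D_G(X_{2n})]
	= \sum_{k\ge0}\PP\bigl(X_{2n}\in\Lambda_k(G)\setminus\{e\}\bigr)
	\;\ge\; (K+1)\,\PP\bigl(X_{2n}\in\Lambda_K(G)\setminus\{e\}\bigr)
	\;\ge\; (K+1)\Bigl(\frac{c^2}{n\,|V_K|}-\PP(X_{2n}=e)\Bigr).
\end{align*}
Taking $K=\lfloor n^2\rfloor$ makes $|V_K|$ sub-polynomial in $n$ (so $|V_K|\le n^{1/2}$ for large $n$) while $\PP(X_{2n}=e)=o(n^{-3})$, whence $\EE[D_G(X_{2n})]\ge \tfrac{c^2}{2}\,n^{1/2}\to\infty$; laziness gives $\PP(X_{2n+1}\in\Lambda_k(G))\ge\tfrac12\PP(X_{2n}\in\Lambda_k(G))$ and disposes of odd times, so $\EE[D_G(X_m)]\to\infty$. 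I expect the main obstacle to be the group-theoretic input: producing the $2$-generated residually finite perfect group $H$ whose finite quotients --- and hence whose intersection growth $[H:\Lambda_k(H)]$ --- are controlled tightly enough to be sub-polynomial, and verifying that the finite quotients of $H\times\ZZ$ are exactly the expected ones so that $\Lambda_k(G)=\Lambda_k(H)\times\mathrm{lcm}(1,\dots,k)\,\ZZ$. Once that algebraic skeleton is in place, the probabilistic half is just the short Cauchy--Schwarz/return-probability estimate above.
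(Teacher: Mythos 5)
Your probabilistic half (the Cauchy--Schwarz bound $\PP(\bar X_{2n}=e)\ge \PP(\bar X_n\in V_k)^2/|V_k|$, the $n^{-1/2}$ lower bound on the $\ZZ$-coordinate, the super-polynomial decay of $\PP(X_{2n}=e)$ via Gromov/Varopoulos, and the telescoping via $\Lambda_K\subseteq\Lambda_k$) is sound, and it is a genuinely different mechanism from the paper's. But the argument rests on a group-theoretic premise that is both false as stated and, in its corrected form, unproven. Premise (i) --- that $[H:\Lambda_k(H)]$ is sub-polynomial in $k$ --- is impossible for \emph{any} infinite residually finite group: whenever $k$ equals the index of a proper finite-index normal subgroup $N\lhd H$, one has $\Lambda_k(H)\subseteq N$ and hence $[H:\Lambda_k(H)]\ge k$. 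So the best you could hope for is that $[H:\Lambda_k(H)]$ is small for $k$ lying in the (huge) gaps between achieved indices, and then the uniform choice $K=\lfloor n^2\rfloor$ fails for those $n$ where $n^2$ sits just above an achieved index; you would need to pick $K(n)$ adaptively inside a gap. More seriously, even the gap version requires an \emph{upper} bound on $[H:\Lambda_k(H)]$, i.e.\ control of \emph{all} normal subgroups of $H$ of index at most $k$. Theorem~\ref{rf-neumann} (the only property of the \cite{BS13b} groups quoted in this paper) gives a lower bound on the depth of certain elements and says nothing about the totality of finite quotients of the subdirect product $H$; establishing that these quotients are only the ``expected'' ones is a substantial piece of group theory that your proposal explicitly leaves open. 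As written, the divergence claim therefore has a genuine gap.

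For comparison, the paper's proof needs none of this. It takes the same ambient group $G=H\times\ZZ$ with $H$ from \cite{BS13b}, but uses only the single element $h_n$ of word length at most $n$ with $D_H(h_n)\ge 24^n$: the lazy walk follows a geodesic to $(h_n,0)$ with probability at least $12^{-n}$, so $\EE[D_G(X_n)]\ge 12^{-n}\cdot 24^n=2^n\to\infty$. The finiteness of the presumed limit is obtained exactly as in your proposal, from $\Lambda_k(G)\subseteq H\times\Lambda_k(\ZZ)$ and the Prime Number Theorem (no perfectness or exact product decomposition of $\Lambda_k(G)$ is needed). If you want to salvage your route, the work to be done is entirely on the algebraic side: construct a two-generated residually finite $H$ of super-polynomial growth together with a proof that every normal subgroup of index at most $k$ contains a fixed subgroup of controlled index for $k$ ranging over suitable intervals.
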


\begin{proof}
The existence of finitely generated residually finite groups with arbitrary large residual finiteness growth was shown in \cite{BS13b}. 

Let $H$ be a two-generated group (with generators $a,b$) such that, 
for any $n\geq 8$, there exists an element $h_n$ in the ball of radius $n$ of $H$ with $D_H(h_n)\geq 24^n$. 
Let  $(X_n)_{n\geq 0}$ be a lazy simple random walk on the Cayley graph of $G=H\times \Z$ with the natural choice of $3$ generators (that is $(a,0)$, $(b,0)$ and $(e_H,1)$, where $a$ and $b$ are the two generators of $H$ mentioned above) and their inverses. 

Then, for any $n \geq 1$, $\mathbb{P}[X_n=(h_n,0)]\geq 12^{-n}$. Therefore 
$$
	\E[D_G(X_n)]\geq \p[X_n=(h_n,0)]\cdot D_{G}[(h_n,0)] 
	=  \p[X_n=(h_n,0)]\cdot D_{H}(h_n)\geq 2^n,
$$ 
Hence the expectation of the depth of $X_n$ tends to infinity.

Furthermore, observe that $\Lambda_k(G)$ is a subgroup of $H\times \Lambda_k(\Z)$ and hence 
\begin{equation*}
|G:\Lambda_k(G)|\geq |\Z: \Lambda_k(\Z)|.
\end{equation*}
 It follows that $\sum_{k\geq 2} \frac{1}{|G:\Lambda_k(G)|}\leq \sum_{k\geq 2}\frac{1}{|\Z:\Lambda_k(\Z)|}<\infty$. 
\end{proof}





\bibliography{References}
\bibliographystyle{alpha}
\end{document}